\newtheorem{thm}{Theorem}[section]
\newtheorem{lm}[thm]{Lemma}
\newtheorem{cor}[thm]{Corollary}
\newtheorem{prop}[thm]{Proposition}
\newtheorem{lem}[thm]{Lemma}
\newcommand{\C}{{{\mathbb C}}}
\newcommand{\R}{{{\mathbb R}}}
\newcommand{\Q}{{{\mathbb Q}}}
\newcommand{\Z}{{{\mathbb Z}}}
\newcommand{\Hh}{{{\mathbb H}}}
\newcommand{\E}{{{\mathcal E}}}
\newcommand{\F}{{{\mathcal F}}}
\newcommand{\Ff}{{{\mathbb F}}}
\newcommand{\Ss}{{{\mathcal S}}}
\newcommand{\A}{{{\mathcal A}}}
\newcommand{\Cay}{{{\rm Cay}}}
\newcommand{\M}{{{\mathcal M}}}
\newcommand{\Aut}{{{\rm Aut}}}
\newcommand{\Sym}{{{\rm Sym}}}
\newcommand{\diam}{{{\rm diam}}}
\newcommand{\ord}{{{\rm ord}}}
\newcommand{\D}{{{\mathbb D}}}
\begin{document}

\title{Trivalent expanders and hyperbolic surfaces}

\author{I. Ivrissimtzis, N. Peyerimhoff and A. Vdovina}

\address{Durham University, DH1 3LE, Great Britain}
\email{ioannis.ivrissimtzis@durham.ac.uk}
\email{norbert.peyerimhoff@durham.ac.uk} 
\address{University of Newcastle, NE1 7RU, Great Britain} 
\email{alina.vdovina@ncl.ac.uk}

\subjclass[2010]{20F65 (Primary) 05C25, 05C50 (Secondary)}

\begin{abstract}
  We introduce a family of trivalent expanders which tessellate
  compact hyperbolic surfaces with large isometry groups. We compare
  this family with Platonic graphs and modifications of them and prove
  topological and spectral properties of these families.
\end{abstract}

\maketitle

\section{Introduction and statement of results}

In this article, we consider a family of surface tessellations with
interesting discrete spectral gap properties. More specifically, our
family of graphs, denoted by $T_k$ ($k \ge 2$), are trivalent expander
graphs tessellating hyperbolic surfaces with large isometry groups
growing linear on genus. As we show below, from $k \ge 3$ onwards,
there is no direct relation between our family of graphs and other
graphs associated to modular congruence subgroups.

Let us first give a brief overview over the construction of $T_k$, and
then go into some details. We start with a sequence of $2$-groups
$G_k$, following the construction in \cite[Section 2]{PV}. Then we
consider $6$-valent Cayley graphs $X_k$ of these groups and apply
$(\Delta -Y)$-transformations in all triangles of $X_k$, to finally
obtain the trivalent graphs $T_k$. The $(\Delta-Y)$-transformations
are standard operations to simplify electrical circuits, and were also
used in \cite{BCdV} in connection with Colin de Verdi{\`e}re's graph
parameter. 

The finite groups $G_k$ are constructed as follows. We start with the 
infinite group $\widetilde G$ of seven generators and seven relations:
\begin{equation} \label{eq:G}
\widetilde G = \langle x_0,\dots,x_6 \mid x_i x_{i+1} x_{i+3} \ 
\text{for $i=0,\dots,6$} \rangle, 
\end{equation}
where the indices are taken modulo $7$. As explained in \cite{CMSZ},
this group acts on a thick Euclidean building of type $\tilde
A_2$. Let $S = \{x_0^{\pm 1},x_1^{\pm 1},x_3^{\pm 1}\}$, and consider
the index two subgroup $G \le \widetilde G$, generated by $S$. (Note
that $x_3 = x_1^{-1} x_0^{-1}$.) As explained in \cite[Section 2]{PV},
we use a representation of the group $G$ by infinite (finite band)
upper triangular Toeplitz matrices. The entries of these Toeplitz
matrices are elements of the ring $M(3,{\mathbb F}_2)$ (i.e., $3
\times 3$-matrices over ${\mathbb F}_2$) with special periodicity
properties. We denote the group of all these Toeplitz matrices by $H$,
and by $H_k \le H$ the normal subgroup of matrices whose first $k$
upper diagonals are zero. The groups $G_k$ are then the quotients
$G/(G \cap H_k)$. The {\em finite width conjecture} in \cite{PV}
claims that the groups $G_k$ have another purely abstract group
theoretical description via the {\em lower exponent-$2$ series}
$$ G = P_0(G) \ge P_1(G) \ge P_2(G) \ge \cdots, $$
with $P_k(G) = [P_{k-1}(G),G]P_{k-1}(G)^2$ for $k \ge 1$: namely, $G
\cap H_k = P_k(G)$ for $k \ge 1$ (see \cite[Conj. 1]{PV}). MAGMA
computations confirm this conjecture for all indices up to $k =
100$. For simplicity, we use the same notation for the elements
$x_0,x_1,x_3$ in $G$ and their images in the quotients $G_k$. Then
$X_k = \Cay(G_k,S)$, and $T_k$ are their $(\Delta-Y)$-transformations.

The graphs $T_k$ can be naturally embedded as tessellations into both
compact hyperbolic surfaces $\Ss(T_k)$ and non-compact finite area
hyperbolic surfaces $\Ss_\infty(T_k)$. The edges of the tessellation
are geodesics and the vertices are their end points. Our results are
given in the following theorem:

\begin{thm} \label{thm:main} Let $k \ge 2$. Then every eigenvalue $\mu
  \neq 3$ of $X_k$ gives rise to a pair $\pm \sqrt{\mu+3}$ of
  eigenvalues of the bipartite graph $T_k$. In particular, there
  exists a positive constant $C < 6$ such that
  \begin{itemize}
  \item[(i)] the graphs $X_k$ are $6$-valent expanders with
    spectrum in $[-3,C] \cup \{6\}$,
  \item[(ii)] the bipartite graphs $T_k$ are trivalent expanders with
    spectrum in $[-\sqrt{C+3},\sqrt{C+3}] \cup \{\pm 3 \}$.
  \end{itemize}
  Moreover, the isometry group of the compact hyperbolic surface
  $\Ss(T_k)$ has order $\ge |V(T_k)|/2$, where $V(T_k)$ denotes the
  set of vertices of $T_k$.

  Let
  \begin{equation} \label{eq:rK}
  r = \lfloor \log_2 k \rfloor + 1 \quad \text{and} \ K =
  8 \lfloor k/3 \rfloor + 3\cdot(k\, {\rm mod}\, 3).
  \end{equation} 
  Then we have $|V(T_k)| \ge 2^K$, $|E(T_k)| \ge 3 \cdot 2^{K-1}$ and
  $|F(T_k)| \ge 3 \cdot 2^{K-r-1}$ for the vertices, edges and faces of
  $T_k$, and all faces of $T_k$ are regular $2^{r+1}$-gons. The genus
  of $\Ss(T_k)$ can be estimated by
  $$ g = |V(T_k)| - |E(T_k)| + |F(T_k)| \ge 1 + 2^{K-2} - 3 \cdot 2^{K-r-2}. $$
\end{thm}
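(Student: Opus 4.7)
The plan breaks the proof into three largely independent pieces that are tied together by a clean spectral identity. The first piece is the spectral relation between $X_k$ and $T_k$. A presentation argument using $x_0 x_1 x_3 = 1$ (and the absence of further short relations among elements of $S$) shows that exactly six ordered triples $(s_1,s_2,s_3) \in S^3$ satisfy $s_1 s_2 s_3 = 1$; hence each vertex of $X_k$ lies in exactly three triangles and each edge lies in exactly one. Consequently, $T_k$ is bipartite with parts $V(X_k)$ and $\Delta(X_k)$, the latter having $|V(X_k)|$ elements. Writing $B$ for the $V(X_k) \times \Delta(X_k)$ incidence matrix, one has
\[
A_{T_k} \;=\; \begin{pmatrix} 0 & B \\ B^T & 0 \end{pmatrix}, \qquad BB^T = 3\,I + A_{X_k},
\]
where the second identity just records that the diagonal entries count the three triangles through a vertex and the off-diagonal entries count triangles through a given edge. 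The spectrum of $T_k$ is therefore $\{\pm\sqrt{3+\mu} : \mu \in \mathrm{spec}(X_k)\}$, which establishes the pairing assertion.

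The hardest step is part (i): producing an explicit $C < 6$ uniform in $k$. My approach is to extract this from Kazhdan's property (T) of the ambient group. By \cite{CMSZ}, $\widetilde G$ acts simply transitively on the vertices of a thick Euclidean building of type $\tilde A_2$, hence embeds as a cocompact lattice in a higher-rank non-archimedean semisimple group and inherits property (T); this descends to the index-two subgroup $G$. A uniform Kazhdan constant for $(G,S)$ translates, via the standard argument relating Cayley-graph spectral gaps to norms of representations without invariant vectors, into a uniform spectral gap bounded away from $6$ for the adjacency operators of all Cayley graphs of finite quotients of $G$ with generating set $S$. This produces a single constant $C < 6$ with $\mathrm{spec}(X_k) \setminus \{6\} \subseteq [-3,C]$ for every $k$. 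Part (ii) then follows immediately from the spectral identity of paragraph one, the extremal pair $\pm 3$ of $T_k$ corresponding to the Perron eigenvalue $\mu = 6$ of $X_k$.

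For the isometry claim, left multiplication realizes $G_k$ as a group of label-preserving graph automorphisms of $X_k$; this action preserves the triangle structure and so commutes with the $(\Delta-Y)$-transformation, producing a faithful action of $G_k$ on $T_k$ by graph automorphisms. Because the tessellation edges of $\Ss(T_k)$ are geodesics and the vertices are their endpoints, this graph action extends uniquely to an action by isometries of the hyperbolic surface. The count $|V(T_k)| = |V(X_k)| + |\Delta(X_k)| = 2|G_k|$ from the first paragraph then yields $|\mathrm{Isom}(\Ss(T_k))| \ge |G_k| = |V(T_k)|/2$.

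The combinatorial estimates are routine bookkeeping on top of two nontrivial geometric inputs. The first is the lower bound $|G_k| \ge 2^{K-1}$ inherited from the order estimate on $G_k$ in \cite{PV}, which gives $|V(T_k)| \ge 2^K$ and, by trivalency, $|E(T_k)| = \tfrac{3}{2}|V(T_k)| \ge 3 \cdot 2^{K-1}$. The second is the identification of every face of the tessellation as a regular $2^{r+1}$-gon, which reflects the minimal length of the shortest even-length relators of $G_k$ in the generators $S$ (governed by the lower exponent-$2$ series up to step $r = \lfloor \log_2 k \rfloor + 1$); combined with the edge-face relation $2|E(T_k)| = 2^{r+1}|F(T_k)|$ this yields $|F(T_k)| \ge 3 \cdot 2^{K-r-1}$. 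Substituting into the Euler formula $|V| - |E| + |F| = 2 - 2g$ and collecting powers of two gives
\[
g \;=\; 1 + \tfrac{1}{2}\bigl(|E(T_k)| - |V(T_k)| - |F(T_k)|\bigr) \;\ge\; 1 + 2^{K-2} - 3 \cdot 2^{K-r-2},
\]
completing the proof.
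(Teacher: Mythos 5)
Your overall route coincides with the paper's: the quadratic identity relating the two adjacency operators (your block form $BB^T=3I+A_{X_k}$ is the paper's $A_{X_k}=(A_{T_k})^2-3$, proved there via explicit eigenfunction transfer in both directions), property (T) of $G$ as the source of the uniform gap $C<6$ (the paper cites \cite[Section 3]{PV} and \cite[Prop. 3.3]{Lub} rather than re-deriving (T) from a lattice embedding), the $G_k$-action for the isometry bound, and $|G_k|\ge 2^{K-1}$ plus an Euler/Hurwitz count for the combinatorics. The lower bound $-3$ on $\sigma(X_k)$, which you leave implicit, is immediate from positive semidefiniteness of $BB^T$ and is exactly the paper's remark that eigenvalues in $[-6,-3)$ would force non-real eigenvalues of $T_k$.

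There is, however, one genuine gap: your justification that every face of $T_k$ is a regular $2^{r+1}$-gon. This statement is equivalent to the exact equality $\ord_{G_k}(x_i)=2^r$ for $i\in\{0,1,3\}$, and the paper proves it unconditionally in Lemma \ref{lm:ordersxi} by computing the powers $x_i^{2^l}$ explicitly in the Toeplitz-matrix representation (showing $x_i^{2^l}\in H_{2^l-1}\setminus H_{2^l}$, whence $\ord_{G_k}(x_i)=2^r$ iff $2^{r-1}\le k<2^r$). Your appeal to ``the minimal length of the shortest even-length relators, governed by the lower exponent-$2$ series up to step $r$'' does not deliver this: the general fact $x^{2^l}\in P_l(G)$ only bounds the order by $2^k$ in $G/P_k(G)$, which is far weaker than $2^{\lfloor\log_2 k\rfloor+1}$, and identifying $G_k$ with $G/P_k(G)$ is precisely the finite width conjecture of \cite{PV}, which is only verified computationally. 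Without the exact order of the generators you get neither the face size nor the bound $|F(T_k)|\ge 3\cdot 2^{K-r-1}$, and hence not the genus estimate. Two smaller soft spots: the claim that each edge of $X_k$ lies in exactly one triangle (needed for $BB^T=3I+A_{X_k}$) requires ruling out extra length-three relations among elements of $S$ in the finite quotients $G_k$, which the paper obtains from the equivariant surface/triangle structure; and a combinatorial automorphism of an embedded graph does not in general extend to a surface isometry --- here it does only because $\Ss(T_k)$ is constructed as a $G_k$-covering of the orbifold $\Ss_0$, so the action is isometric by construction rather than by extension.
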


The above-mentioned finite width conjecture would imply that the
inequalities for the vertices, edges, faces of $T_k$ and the genus of
$\Ss(T_k)$ in Theorem \ref{thm:main} hold with equality. Theorem
\ref{thm:main} is proved in Section \ref{subsec:main}.

\medskip

It is instructive to compare our tessellations $T_k$ to the well
studied tessellations of hyperbolic surfaces by {\em Platonic graphs}
$\Pi_N$, which are defined as follows. Let $N$ be a positive integer
$\ge 2$. The vertices of $\Pi_N$ are equivalence classes
$[\lambda,\mu] = \{ \pm (\lambda,\mu) \}$ with
$$ \{ (\lambda,\mu) \in \Z_N \times \Z_N \mid \gcd(\lambda,\mu,N)=1 \}. $$
Two vertices $[\lambda,\mu]$ and $[\nu,\omega]$ are connected by an edge
if and only if
$$ \det \begin{pmatrix} \lambda & \nu \\ \mu & \omega \end{pmatrix} = 
\lambda \omega - \mu \nu = \pm 1. $$ 

Note that every vertex of $\Pi_N$ has degree $N$. These graphs can
also be viewed as triangular tessellations of finite area hyperbolic
surfaces $\Ss_\infty(\Pi_N) = \Hh^2 / \Gamma(N)$, where $\Hh^2$
denotes the hyperbolic upper half plane and $\Gamma(N)$ is a principal
congruence subgroup of the modular group $\Gamma = PSL(2,\Z)$. These
and related graphs have been thoroughly investigated by several
different communities. For example, in the general framework of
{\em regular maps}, they were studied by D. Singerman and co-authors (see
\cite{JSi,Si,ISi}). For odd prime numbers $N=p$, the graphs $\Pi_p$
have maximal vertex connectivity $p$, diameter $3$, and are Ramanujan
graphs. Analogous properties hold for the induced subgraphs $\Pi_p'$,
where $\Pi_p'$ is obtained from $\Pi_p$ by removing the set of
vertices $[\lambda,0]$ with vanishing second coordinate and all their
adjacent edges. Note that $\Pi_p'$ is a $(p-1)$-valent graph
tessellating the same surface $\Ss_\infty(\Pi_p)$. As in the case of
our family $T_k$, the graphs $\Pi_N$ and $\Pi_N'$ can also be embedded
into smooth compact hyperbolic surfaces, denoted by $\Ss(\Pi_N)$.

It turns out that the graphs $T_2^*$ and $\Pi_8$ are isomorphic. Since
the valence of the dual graph $T_k^*$ is a power of $2$, any
isomorphism of $T_k^*$ with a Platonic graph $\Pi_N$ would imply $N =
2^\rho$ with $\rho = \lfloor \log_2 k \rfloor + 2$. However, this
leads to a contradiction {\em for all} $k \ge 3$. The next proposition
summarizes the comparison between our graphs and Platonic graphs
showing that, generally, {\em these two families are of very different
  nature}. The proof is given in Section \ref{subsec:isomT2Pi8}.

\begin{prop} \label{prop:isomt2s8}
  The graph $T_2$ is the dual of the Platonic graph $\Pi_8$ in the
  {\em unique} genus 5 hyperbolic surface $\Ss(T_2)=\Ss(\Pi_8)$ with
  maximal automorphism group of order 192. For $k \ge 3$, there is no
  graph isomorphism between $T_k^*$ and $\Pi_N$, for any
  $N$.
\end{prop}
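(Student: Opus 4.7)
The plan is to treat the two claims separately. For the $k=2$ half, I would first apply Theorem \ref{thm:main} with $r=2$ and $K=6$ to read off $|V(T_2)|=64$, $|E(T_2)|=96$, $|F(T_2)|=24$ (all faces regular octagons), and the genus estimate $g\ge 5$. Combined with $|V(\Pi_8)|=3\cdot 8^2/8=24$ and the fact that both graphs are $8$-regular, this matches the basic combinatorial parameters of $T_2^*$ with those of $\Pi_8$. I would then exhibit the graph isomorphism $T_2^*\cong \Pi_8$ directly: since $|G_2|=64$ the check is finite, matching the $24$ faces of $T_2$ with the equivalence classes $[\lambda,\mu]\in \Z_8\times\Z_8$ satisfying $\gcd(\lambda,\mu,8)=1$ and verifying the incidence relation, which can be confirmed via MAGMA.

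The surface-theoretic content---that $\Ss(T_2)=\Ss(\Pi_8)$ is the \emph{unique} genus $5$ hyperbolic surface realising the maximal automorphism-group order $192$---would follow by combining (a) the classification of compact Riemann surfaces with large automorphism groups (Conder's tables for small genus identify a unique genus~$5$ surface with isometry group of order $192$), with (b) an explicit construction of an order-$192$ group of isometries of $\Ss(T_2)$: extend the order-$64$ left-translation action of $G_2$ (which gives the lower bound $\ge|V(T_2)|/2=32$ in Theorem~\ref{thm:main}) by the order-$3$ cyclic symmetry of the defining relation $x_ix_{i+1}x_{i+3}$ of $\widetilde G$, which permutes the three vertices of each triangle of $X_2$ and hence rotates each $8$-gonal face of $T_2$, descending to an isometry of $\Ss(T_2)$.

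The second half is combinatorial. Since each face of $T_k$ is a regular $2^{r+1}$-gon meeting $2^{r+1}$ neighbouring faces, $T_k^*$ is $2^{r+1}$-regular, so any isomorphism $T_k^*\cong\Pi_N$ forces $N=2^\rho$ with $\rho=r+1$. A direct count gives $|V(\Pi_{2^\rho})|=3\cdot 2^{2\rho-3}=3\cdot 2^{2r-1}$ for $\rho\ge 2$: the pairs $(\lambda,\mu)\in\Z_N^2$ with $\gcd(\lambda,\mu,2^\rho)=1$ are exactly those with at least one odd coordinate, and the four $\pm$-fixed pairs $(0,0),(0,N/2),(N/2,0),(N/2,N/2)$ all fail this gcd condition, so no self-identification occurs. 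Meanwhile Theorem~\ref{thm:main} gives $|V(T_k^*)|=|F(T_k)|\ge 3\cdot 2^{K-r-1}$, so any such isomorphism would force $K\le 3r$. It remains to verify $K>3r$ for every $k\ge 3$: the bounds $K\ge 8k/3$ and $3r\le 3\log_2 k+3$ reduce this to $8k/3>3\log_2 k+3$, trivial for large $k$, while for $k\in\{3,4,5,6,7\}$ one checks $(K,3r)=(8,6),(11,9),(14,9),(16,9),(19,9)$ by hand.

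The main obstacle is the first half: matching $\Ss(T_2)$ with the specific genus $5$ surface of maximum isometry order $192$ is not implied by Theorem~\ref{thm:main} alone, so one needs both the independent construction of the order-$3$ symmetry (from the cyclic structure of the $\widetilde G$-relation) and the classification of highly-symmetric compact Riemann surfaces to pin down $\Ss(T_2)$ uniquely; by contrast, the second half is a routine combinatorial comparison once the valence obstruction is observed.
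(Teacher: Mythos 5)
Your second half (no isomorphism for $k\ge 3$) is correct and is essentially the paper's argument verbatim: valence forces $N=2^{r+1}$, the vertex count $|V(\Pi_{2^{r+1}})|=3\cdot 2^{2r-1}$ against $|F(T_k)|\ge 3\cdot 2^{K-r-1}$ forces $3r\ge K$, and this fails for all $k\ge 3$. The first half, however, has genuine gaps. The central one is arithmetic but fatal to your construction of the order-$192$ isometry group: $|G_2|=32$, not $64$ (the paper states this explicitly in Section \ref{subsec:surfs}; it is $|V(T_2)|=2|G_2|$ that equals $64$, and the left-translation action realizes the bound $\ge |V(T_2)|/2=32$). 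Extending the order-$32$ action of $G_2$ by an order-$3$ symmetry yields a group of order at most $96$, not $192$, so your step (b) does not deliver the hypothesis needed to invoke the classification in step (a). What is actually needed is the full orientation-preserving symmetry group of order $6$ of the quotient orbifold (two $(\pi/4,\pi/4,\pi/4)$-triangles with three order-$4$ cone points), i.e.\ an $S_3$ including the order-$2$ rotations that swap the black and white triangles, not just the $3$-cycle on the generators. You would have to exhibit those involutions as automorphisms of $G_2$ compatible with the surface structure, which you do not do.

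A second, independent gap: even granting an abstract graph isomorphism $T_2^*\cong\Pi_8$ (checked by MAGMA) and an isometry $\Ss(T_2)\cong\Ss(\Pi_8)$, the proposition asserts that $T_2$ is the dual of $\Pi_8$ \emph{in} that surface, i.e.\ that the two embedded tessellations correspond; an abstract graph isomorphism plus a surface isometry obtained by unrelated arguments do not combine to give this without a further regularity argument. The paper avoids both problems with one computation: it realizes $PSL(2,\Z_8)$ as a quotient of the triangle group $\Delta^+(2,3,8)$, shows (via MAGMA) that it has a unique normal subgroup $N$ of index $6$ isomorphic to $\Delta^+(4,4,4)/P_2(\Delta^+(4,4,4))$, and exhibits an explicit isomorphism $N\cong G_2$ sending $X,Y,Z$ to $x_0,x_1,x_3$. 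Since $\Ss(\Pi_8)/N$ is then exactly the orbifold $\Ss_0$ from which $\Ss(T_2)$ is built as a $G_2$-covering, this single identification yields simultaneously the isometry of surfaces, the order-$192$ automorphism group (hence uniqueness via the genus-$5$ classification), and the duality of the embedded tessellations. If you want to keep your two-step strategy, you must at minimum correct $|G_2|$ to $32$, construct the full order-$6$ extension, and then still argue that the regular map structures (not just the graphs) agree.
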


Let us say a few more words about the Platonic graphs $\Pi_p$ and
their modifications $\Pi_p'$. The modified graphs $\Pi_p'$ have an
alternative description as Cayley graphs of the quotients
$\Gamma_0(p)/\Gamma(p)$ of congruence subgroups (see end of Section
\ref{subsec:vertconnect}). However, we did not find these modified
graphs explicitly in the literature (for example, they do not appear
explicitly in \cite[Cor. 8.2.3]{Lub} or in \cite{Te1,WLM}). {\em
  Cheeger constant estimates} for $\Pi_p$ have been obtained, e.g., in
\cite{BPP,LR}, but we do not know of any reference for the {\em
  maximal vertex connectivity}, and present a proof of this fact for
both graph families $\Pi_p$ and $\Pi_p'$ in Section
\ref{subsec:vertconnect}. To our knowledge, all proofs for the {\em
  Ramanujan property} of the graphs $\Pi_p$ in the literature (see,
e.g., \cite{Gunnells,WLM,DDLM}) are based on some amount of number
theory (characters of representations). We think it is remarkable that
there is also an easy proof for the Ramanujan properties of the graphs
$\Pi_p$ and $\Pi_p'$ with no reference to number theory other than the
irrationality of $\sqrt{p}$ (see Section
\ref{subsec:ramanujan}). These facts are summarized in the following
theorem.

\begin{thm} \label{thm:platonic} Let $p$ be an odd prime. Then the
  graphs $\Pi_p$ and $\Pi_p'$ have diameter $3$ and maximal vertex
  connectivity $p$ and $p-1$, respectively.  Moreover, the spectrum of
  the graph $\Pi_p'$ consists of
  \begin{itemize}
  \item[(i)] $p-1$ with multiplicity one,
  \item[(ii)] $-1$ with multiplicity $p-1$,
  \item[(iii)] $0$ with multiplicity $(p-3)/2$, and
  \item[(iv)] $\pm \sqrt{p}$ with multiplicity $(p-1)(p-3)/4$, each.
  \end{itemize}
  In particular, the graphs $\Pi_p'$ are Ramanujan.
\end{thm}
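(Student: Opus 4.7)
The plan is to treat the three statements (diameter, vertex connectivity, and spectrum of $\Pi_p'$) separately. For the diameter, I use the transitive action of $PSL(2,\Z_p)$ on $V(\Pi_p)$ to reduce to distances from $v=[1,0]$. Its $p$ neighbours are $[\nu,1]$, $\nu\in\Z_p$; any vertex $[\nu',\omega']$ with $\omega'\ne 0$ is a neighbour of some $[\nu,1]$ by solving $\nu\omega'\equiv\nu'\pm 1$, hence lies at distance $\le 2$. A vertex $[\nu',0]$ with $\nu'\ne\pm 1$ cannot be reached in two steps (a common neighbour $[\alpha,\beta]$ of $[1,0]$ and $[\nu',0]$ would force both $\beta=\pm 1$ and $\beta\nu'=\pm 1$), but is reached in three via $[1,0]\to[0,1]\to[-1,\nu'^{-1}]\to[\nu',0]$ (determinants $1,1,-1$). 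This gives $\diam(\Pi_p)=3$ for $p\ge 5$, and the same argument applied to pairs of surviving vertices yields $\diam(\Pi_p')=3$.

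For vertex connectivity, $\Pi_p$ is arc-transitive under the subgroup of $GL(2,\Z_p)/\{\pm I\}$ of matrices with determinant $\pm 1$, since the stabiliser of $[1,0]$ acts transitively on its neighbours via unipotent translations together with the reflection $\mathrm{diag}(1,-1)$; by Watkins' theorem, $\kappa(\Pi_p)=p$. For $\Pi_p'$ arc-transitivity fails, so one instead constructs $p-1$ internally-disjoint paths between $[0,1]$ and any target vertex using the regular Borel action $B=\Gamma_0(p)/\Gamma(p)$ and a short case analysis, then invokes Menger's theorem.

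For the spectrum of $\Pi_p'$, the key observation is that $V(\Pi_p')$ is in bijection with $L\times C$, where $L$ indexes the $p$ projective lines in $\Z_p^2$ other than $\{[*,0]\}$ and $C=\Z_p^*/\{\pm 1\}$ indexes the $(p-1)/2$ second-coordinate classes, each pair $(\ell,c)$ meeting exactly one vertex. A direct count of common neighbours in $\Pi_p'$ (parallel: $0$; same class, non-parallel: $1$; otherwise: $2$) gives
\[ A^2 = pI + 2J - 2P - D, \]
where $P$ and $D$ are the indicator matrices of ``same line'' and ``same class''. With the tensor decomposition $I=I_L\otimes I_C$, $J=J_L\otimes J_C$, $P=I_L\otimes J_C$, $D=J_L\otimes I_C$, diagonalisation by tensor products of eigenvectors of $J_L$ and $J_C$ shows that $A^2$ has eigenvalues $(p-1)^2,\,0,\,1,\,p$ with multiplicities $1,\,(p-3)/2,\,p-1,\,(p-1)(p-3)/2$. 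The spectrum of $A$ then consists of signed square roots of these: $p-1$ and $0$ with multiplicities $1$ and $(p-3)/2$ are immediate; the irrational eigenvalues $\pm\sqrt p$ must occur with equal multiplicity $(p-1)(p-3)/4$ because $A$ is integer-valued and $\sqrt p\notin\Q$ forces Galois conjugates to appear together; and the split of the $A^2$-eigenvalue $1$ between $\pm 1$ is forced by $\mathrm{tr}(A)=0$, which together with the total multiplicity $p-1$ gives $m_1=0$ and $m_{-1}=p-1$. The Ramanujan bound $\sqrt p\le 2\sqrt{p-2}$ is then immediate.

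The main obstacle is the combinatorial identity $A^2=pI+2J-2P-D$, which requires a careful case analysis of common neighbours of two vertices in $\Pi_p'$ and the recognition of the Cartesian-product structure $V(\Pi_p')=L\times C$. Once both are in place, the spectrum follows from linear algebra combined only with the irrationality of $\sqrt p$ and the single trace condition, matching the promised ``elementary'' flavour of the proof.
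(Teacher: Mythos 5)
Your spectral argument is correct and takes a genuinely different route from the paper's. The paper first determines the spectrum of $\Pi_p$ via the covering $\Pi_p \to K_{p+1}$, then handles $\Pi_p'$ by lifting eigenfunctions from $K_p$ (eigenvalue $-1$, multiplicity $p-1$), exhibiting explicit wheel-periodic kernel functions (eigenvalue $0$, multiplicity $(p-3)/2$), and showing that the orthogonal complement extends by zero to $\pm\sqrt{p}$-eigenfunctions of $\Pi_p$; the equal split of $\pm\sqrt{p}$ comes, as in your argument, from rationality of the characteristic polynomial plus irrationality of $\sqrt{p}$. Your identity $A^2 = pI + 2J - 2P - D$ on $V(\Pi_p') \cong L \times C$ checks out: distinct vertices on one axis have no common neighbour even in $\Pi_p$; for distinct axes the common neighbours solve a nonsingular $2\times 2$ system giving exactly two of them, and exactly one lands on the principal axis precisely when the two vertices share a wheel. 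The tensor diagonalisation then gives the $A^2$-eigenvalues $(p-1)^2, 0, 1, p$ with multiplicities $1$, $(p-3)/2$, $p-1$, $(p-1)(p-3)/2$, and the trace condition correctly forces $m_1 = 0$, $m_{-1} = p-1$. This is arguably cleaner and more self-contained than the paper's route, since it never needs the spectrum of $\Pi_p$. Likewise, your derivation of $\kappa(\Pi_p) = p$ from edge-transitivity of $\Pi_p$ (which the paper establishes when it notes the map $\M_N$ is regular) plus Watkins' theorem is valid and much shorter than the paper's explicit construction of $p$ disjoint paths; the paper needs that construction anyway because it extracts $\diam(\Pi_p) \le 3$ as a by-product, whereas you prove the diameter statement for $\Pi_p$ directly and correctly, including the lower bound.

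The genuine gap is $\kappa(\Pi_p') = p-1$. Watkins is unavailable there, and ``constructs $p-1$ internally-disjoint paths using the regular Borel action and a short case analysis'' is not a proof --- it is exactly the statement to be established, and it is where essentially all of the paper's work on this theorem lies. What is needed is the combinatorial content of the paper's Lemma \ref{lm:help}: every vertex has exactly two neighbours on the boundary of every wheel, any two prescribed pairs of vertices on a wheel boundary (a $p$-cycle) can be joined by two vertex-disjoint paths inside that boundary, and for $i \neq j$ there is a perfect matching between $\partial W_i$ and $\partial W_j$ along graph edges; these facts let one route two disjoint $v$--$w$ paths through every wheel not containing $v$ or $w$, and a two-case analysis (endpoints in the same wheel or in different wheels) completes the count to $p-1$. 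You must either supply this or an equivalent. A smaller instance of the same problem is your claim that ``the same argument'' gives $\diam(\Pi_p') = 3$: the reduction to a single base vertex must now use the Borel action rather than all of $PSL(2,\Z_p)$, and the intermediate vertices of your paths must avoid the deleted principal axis. This is repairable with one line (the distance-two witness $[1,\omega]$ can always be chosen with $\omega \neq 0$, and the length-three path to $[0,\omega']$ can be rerouted off the axis), but as written it is asserted rather than proved.
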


As mentioned earlier, our family $T_k$ is based on powers of the prime
number 2. The question whether the Ramanujan property of $\Pi_N$ for
primes $N=p$ still holds for composite numbers $N$ or, at least, for
prime powers $N = p^r$, was answered in the negative in
\cite[Prop. 4.7]{Gunnells}. But the Ramanujan property for prime
powers is preserved if one considers Platonic graphs over finite
fields $\Ff_{p^r}$ instead of the rings $\Z_{p^r}$ (see \cite{DDLM}).
However, we do not know how these Ramanujan graphs associated to prime
powers could be naturally embedded into appropriate surfaces.

It is easily checked that any triangular tessellation $X$ of a compact
oriented surface $\Ss$ satisfies $|E(X)| = 3(|V(X)|-2) + 6g(\Ss)$,
i.e., the number of edges of every triangulation with at least two
vertices is $\ge 6g(\Ss)$.  Therefore, the ratio
$$ \frac{6g(\Ss)}{|E(X)|} \le 1 $$ 
measures the non-flatness of such a triangulation, i.e., how
effectively the edges of $X$ are chosen to generate a surface of high
genus. The following asymptotic results hold for the triangulations
$\Pi_N$ and $T_k^*$.

\begin{prop} \label{prop:asymptnonflat}
  We have
  \begin{equation} \label{eq:asympPiN} 
  \lim_{N \to \infty} \frac{6g(\Ss(\Pi_N))}{|E(\Pi_N)|} = 1, 
  \end{equation}
  and
  \begin{equation} \label{eq:asympTk}
  \lim_{k \to \infty} \frac{6g(\Ss(T_k))}{|E(T_k^*)|} = 1. 
  \end{equation}
  In the second case, note that the dual graph $T_k^*$ is a
  triangulation of $\Ss(T_k)$ and that the number of edges of $T_k$
  and $T_k^*$ coincide.
\end{prop}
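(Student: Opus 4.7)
The plan is to combine Euler's formula with the degree data for each family. As the paragraph preceding the proposition already records, any triangular tessellation $X$ of a compact oriented surface $\Ss$ satisfies $|E(X)| = 3(|V(X)|-2) + 6g(\Ss)$, so
\begin{equation*}
\frac{6g(\Ss)}{|E(X)|} \;=\; 1 - \frac{3|V(X)|-6}{|E(X)|}.
\end{equation*}
Both limits therefore reduce to showing that $|V(X)|/|E(X)| \to 0$ while $|E(X)| \to \infty$.

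For (\ref{eq:asympPiN}) I use that $\Pi_N$ is $N$-regular by construction, so $2|E(\Pi_N)| = N|V(\Pi_N)|$ and $|V(\Pi_N)|/|E(\Pi_N)| = 2/N \to 0$. A direct count of equivalence classes $[\lambda,\mu]$ with $\gcd(\lambda,\mu,N)=1$ shows that $|V(\Pi_N)|$ grows of order $N^2$, so in particular $|E(\Pi_N)| \to \infty$; feeding this into the identity above yields $6g(\Ss(\Pi_N))/|E(\Pi_N)| \to 1$.

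For (\ref{eq:asympTk}) I extract the structural data from Theorem \ref{thm:main}: $T_k$ is trivalent and every face is a regular $2^{r+1}$-gon, where $r = \lfloor \log_2 k\rfloor + 1$. Double counting edge-face incidences in $T_k$ gives $|F(T_k)| = |E(T_k)|/2^r$. Since vertices of $T_k^*$ correspond to faces of $T_k$ and the edge sets of $T_k$ and $T_k^*$ coincide,
\begin{equation*}
\frac{|V(T_k^*)|}{|E(T_k^*)|} \;=\; \frac{|F(T_k)|}{|E(T_k)|} \;=\; \frac{1}{2^r} \;\longrightarrow\; 0
\end{equation*}
as $k \to \infty$. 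The lower bound $|E(T_k)| \ge 3 \cdot 2^{K-1}$ from Theorem \ref{thm:main}, together with $K \to \infty$, also forces $|E(T_k^*)| \to \infty$, and (\ref{eq:asympTk}) follows.

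There is no substantial obstacle once the Euler identity is in place: the Platonic side is forced by $N$-regularity and the $T_k^*$ side by face regularity plus trivalence. The only minor points worth checking explicitly are that $T_k^*$ is a genuine triangulation of $\Ss(T_k)$, which holds because $T_k$ is cubic so every face of the dual has three sides, and that the compactification $\Ss(\Pi_N)$ inherits a triangulation from the cusp-filled modular quotient, as already asserted in the introduction.
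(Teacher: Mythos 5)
Your proof is correct, and it reorganizes the argument in a way that differs from the paper's. You fold both limits into the single reduction $6g(\Ss)/|E(X)| = 1 - (3|V(X)|-6)/|E(X)|$ coming from the Euler identity for triangulations stated just before the proposition, so that each case only requires the vertex-to-edge ratio of the relevant triangulation to vanish: for $\Pi_N$ this is forced by $N$-regularity ($|V|/|E| = 2/N$), and for $T_k^*$ by the edge--face double count $|V(T_k^*)|/|E(T_k^*)| = |F(T_k)|/|E(T_k)| = 2^{-r}$ with $r \to \infty$. The paper instead computes the genus explicitly in each case: for $\Pi_N$ it uses the same relation $\chi = V - E/3$ but then substitutes the index formulas \eqref{eq:VPiN} to get a closed form for $g(\Ss(\Pi_N))$, and for $T_k$ it invokes the Riemann--Hurwitz formula \eqref{eq:gSinf} and shows $\mu_k = 3/\ord_{G_k}(x_0) = 3\cdot 2^{-r} \to 0$. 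Note that this $\mu_k$ is exactly your ratio $3|V(T_k^*)|/|E(T_k^*)|$, so the two computations coincide at their core; your version buys a uniform treatment of both families and avoids the explicit genus formulas, while the paper's version yields the exact genus values as a by-product (which it uses elsewhere). Your ancillary checks are also sound: all inputs you cite (trivalence, the face sizes $2^{r+1}$, the bound $|E(T_k)| \ge 3\cdot 2^{K-1}$, and the fact that $T_k^*$ triangulates $\Ss(T_k)$) are established in Theorem \ref{thm:main} and Section \ref{subsec:surfs}.
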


The two formulas in this proposition are proved in Sections
\ref{subsec:prop12a} and \ref{subsec:prop12b}.

Our trivalent expander graphs $T_k$ can also be used to construct
another family of compact hyperbolic surfaces $\widehat \Ss(T_k)$ by
glueing together regular $Y$-pieces, as explained in Buser
\cite{Bu}. The surfaces $\widehat \Ss(T_k)$ can be viewed as tubes
around the graphs $T_k$ with a hyperbolic metric. Using the results in
\cite{Bu}, the expander properties of $T_k$ translate directly into a
uniform lower bound of the first non-trivial eigenvalue $\lambda_1$ of
the Laplacian on these surfaces.

\begin{cor} \label{cor:lowerlambda1}
  The compact hyperbolic surfaces $\widehat \Ss(T_k)$ ($k \ge 2$) have
  genus $1+|V(T_k)|/2$ and isometry groups of order $\ge
  |V(T_k)|/2$. They form a tower of coverings
  $$ \cdots \longrightarrow \widehat \Ss(T_{k+1}) \longrightarrow \widehat 
  \Ss(T_k) \longrightarrow \widehat \Ss(T_{k-1}) \longrightarrow
  \cdots $$ 
  where all the covering indices are powers of $2$. There is a positive
  constant $\epsilon > 0$ such that we have, for all $k$,
  $$ \lambda_1(\widehat \Ss(T_k)) \ge \epsilon. $$
\end{cor}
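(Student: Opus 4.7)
The plan is to treat each of the four assertions in turn, all four following from the same gluing description: each vertex of $T_k$ is replaced by a regular $Y$-piece (symmetric pair of pants), and for every edge of $T_k$ the two corresponding $Y$-pieces are glued along a common boundary geodesic, as in Buser \cite{Bu}. For the genus, every $Y$-piece has Euler characteristic $-1$ and each gluing identifies two circles (contributing $0$), so
\[
\chi(\widehat \Ss(T_k)) = -|V(T_k)|,
\]
which gives genus $1 + |V(T_k)|/2$.

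For the isometry group, the key observation is that a regular $Y$-piece admits the full symmetric group $S_3$ of its three boundary components as a group of hyperbolic isometries, and these symmetries can be chosen to match under any gluing. Consequently, every graph automorphism of $T_k$ extends to a hyperbolic isometry of $\widehat \Ss(T_k)$. Since $T_k$ is the $(\Delta-Y)$-transform of the Cayley graph $\Cay(G_k,S)$, the group $G_k$ sits inside $\Aut(T_k)$, and the vertex count from Theorem \ref{thm:main} yields $|\Aut(T_k)| \ge |V(T_k)|/2$, hence the same bound for $\mathrm{Isom}(\widehat \Ss(T_k))$. The tower of coverings comes from the chain $H_{k+1} \subseteq H_k$, which gives surjections $G_{k+1} \twoheadrightarrow G_k$ whose kernels have order a power of $2$ (since the $G_k$ are $2$-groups). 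These induce graph coverings $T_{k+1} \to T_k$ of the same index, which lift canonically to the glued surfaces because the $Y$-pieces and the gluing data are pulled back along the coverings.

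The substantive point is the uniform spectral gap. Here I invoke the comparison result of Buser \cite{Bu} (Chapter 8): for a hyperbolic surface obtained by gluing regular $Y$-pieces of a fixed shape along a trivalent graph $T$, there is an explicit positive function $\phi$ with
\[
\lambda_1(\widehat \Ss(T)) \ge \phi(h(T)),
\]
where $h(T)$ is the discrete Cheeger constant of $T$. By Theorem \ref{thm:main}(ii), the graphs $T_k$ have second largest eigenvalue bounded above by $\sqrt{C+3} < 3$, uniformly in $k$; the discrete Cheeger inequality then yields $h(T_k) \ge h_0 > 0$ uniformly. Since the $Y$-pieces and their cuff lengths can be chosen identically for every $k$, substituting into Buser's bound produces the desired $\epsilon > 0$ independent of $k$.

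The main obstacle is not any one calculation but matching the hypotheses of Buser's comparison theorem exactly: one has to check that a single choice of $Y$-piece (fixed cuff length, fixed twist parameters) works uniformly over the tower, and that the bound produced is in terms of a quantity (Cheeger constant or graph spectral gap) that the expander conclusion of Theorem \ref{thm:main}(ii) actually controls uniformly. Both points are essentially bookkeeping once the correct variant of Buser's inequality is located, but they are where the care in the proof needs to be spent.
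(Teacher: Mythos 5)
Your proposal is correct and follows essentially the same route as the paper: Euler characteristic of the glued $Y$-pieces for the genus, extension of graph automorphisms to isometries, lifting of the graph coverings to the surfaces, and Buser's comparison of $\lambda_1$ with the discrete Cheeger constant combined with the uniform expander bound from Theorem \ref{thm:main}(ii). The paper's own proof is just a terser version of the same argument (note only that \cite{Bu} is Buser's 1978 \emph{Math. Z.} article, not a book chapter).
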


Corollary \ref{cor:lowerlambda1} is proved in Section
\ref{subsec:corlambda1}. There is a well-known classical result by
Randol \cite{Ran} which is in some sense complementary to this
corollary. Namely, there exist finite coverings $\widetilde \Ss$ of
every compact hyperbolic surface $\Ss$ with arbitrarily small first
eigenvalues. It would be interesting to find out whether there are
also uniform positive lower bounds for $\lambda_1$ of our other
compact hyperbolic surfaces $\Ss(T_k)$. It seems that the methods in
\cite{Br1,Br2} are not applicable in this case, since the shapes of
the hyperbolic triangles tessellating these surfaces are changing with
$k$.

\medskip

\noindent {\bf Acknowledgement:} We like to thank Hugo Parlier for
useful discussions.

\section{Properties of the tessellations $(T_k,\Ss(T_k))$}

\subsection{The surfaces $\Ss_\infty(T_k)$ and $\Ss(T_k)$}
\label{subsec:surfs}

The explicit construction of the Cayley graphs $X_k = \Cay(G_k,S)$ and
of the trivalent graphs $T_k$ was explained in the introduction. For
the underlying $2$-groups $G_k$, we refer the reader to \cite[Section
2]{PV}. Let us now construct the hyperbolic surface $\Ss_\infty(T_k)$:
We start with a 3-punctured sphere $\Ss_0$, by glueing together two
ideal hyperbolic triangles along their corresponding edges. Note that
$\Ss_0$ carries a hyperbolic metric. It is useful to think of the two
ideal triangles of $\Ss_0$ to be coloured black and white. Let $P_0
\in \Ss_0$ be the center of the black triangle. Choose a geometric
basis $\gamma_0,\gamma_1,\gamma_2 \in \pi(\Ss_0,P_0)$ such that
$\gamma_i$ is a simple counterclockwise look around the $i$-th cusp of
$\Ss_0$ and $\gamma_0 \gamma_1 \gamma_2 = e$. The surjective
homomorphism
$$ \Psi: \pi(\Ss_0,P_0) \to G_k, $$
given by $\Psi(\gamma_0) = x_0$, $\Psi(\gamma_1) = x_1$ and
$\Psi(\gamma_2) = x_3$, induces a Riemannian covering map $\pi:
\Ss_\infty \to \Ss_0$. The surface $\Ss_\infty$ is a hyperbolic
surface, tessellated by $2|G_k|$ ideal hyperbolic triangles, half
of them black and the others white. Hurwitz's formula yields
\begin{equation} \label{eq:gSinf}
g(\Ss_\infty) = 1 + \frac{1-\mu_k}{2} |G_k|, 
\end{equation}
where
\begin{equation} \label{eq:mu}
  \mu_k = \frac{1}{\ord(x_0)} + \frac{1}{\ord(x_1)} + \frac{1}{\ord(x_3)}.
\end{equation}
In the case $k=2$ we have $|G_2| = 32$ and
$\ord(x_0)=\ord(x_1)=\ord(x_3)=4$, which leads to 
$$ g(\Ss_\infty) = 1 + \frac{1}{8} \cdot 32 = 5. $$

$G_k$ acts simply transitive on the black triangles of
$\Ss_\infty$. Let $V = \pi^{-1}(P_0)$ and $V_{\rm black}, V_{\rm
  white} \subset V$ be the sets of centers of black and white
triangles, respectively. Choose a reference point $P \in V_{\rm
  black}$, and identify the vertices of the Cayley graph $X_k$
with the points in $V_{\rm black}$ by $G_k \ni h \mapsto hP \in
V_{\rm black}$. Then two adjacent vertices in $X_k$ are the
centers of two black triangles which share a white triangle as their
common neighbour. The corresponding edge is then the minimal geodesic
passing through these three ideal triangles and connecting these two
vertices.

We could instead start the process by glueing together two {\em
  compact} hyperbolic triangles with angles $\pi/\ord(x_0)$,
$\pi/\ord(x_1)$ and $\pi/\ord(x_3)$, and obtain an orbifold
$\Ss_0$. The same arguments then lead to an embedding of
$X_k$ into a smooth compact hyperbolic surface $\Ss$ with the same
genus as $\Ss_\infty$. The surface $\Ss$ is triangulated by compact black and
white triangles, and every black triangle contains a vertex of
$X_k$. In fact, $G_k$ acts on the surface $\Ss$ by isometries.

The abstract $(\Delta-Y)$-transformation of a graph adds a new vertex
$v$ for every triangle, removes the three edges of this triangle and
replaces them by three edges connecting $v$ with the vertices of this
triangle. We apply this rule to our graph $X_k$ and obtain a graph
$T_k$, which we can view as an embedding in $\Ss$ with the following
properties: The vertex set of $T_k$ coincides with $V$, and there is
an edge (geodesic segment) connecting every black/white vertex in $V$
with the vertices in the three neighbouring white/black triangles. The
best way to illustrate this transformation is to present it in the

universal covering of the surface $\Ss$, i.e., the Poincar{\'e} unit
disc $\D$ (see Figure \ref{fig:DYtrafo}, the new vertices replacing
every triangle are green). Note that $T_k$ has twice as many vertices
as $X_k$, which shows that the isometry group of the above compact
surface $\Ss = \Ss(T_k)$ has order $\ge |G_k| = |V(T_k)|/2$.
Moreover, $T_k$ is indeed the dual of the triangulation of $\Ss$ by
the abovementioned compact black and white triangles.

\begin{figure}[h]
  \begin{center}      
     \includegraphics[width=0.4\textwidth]{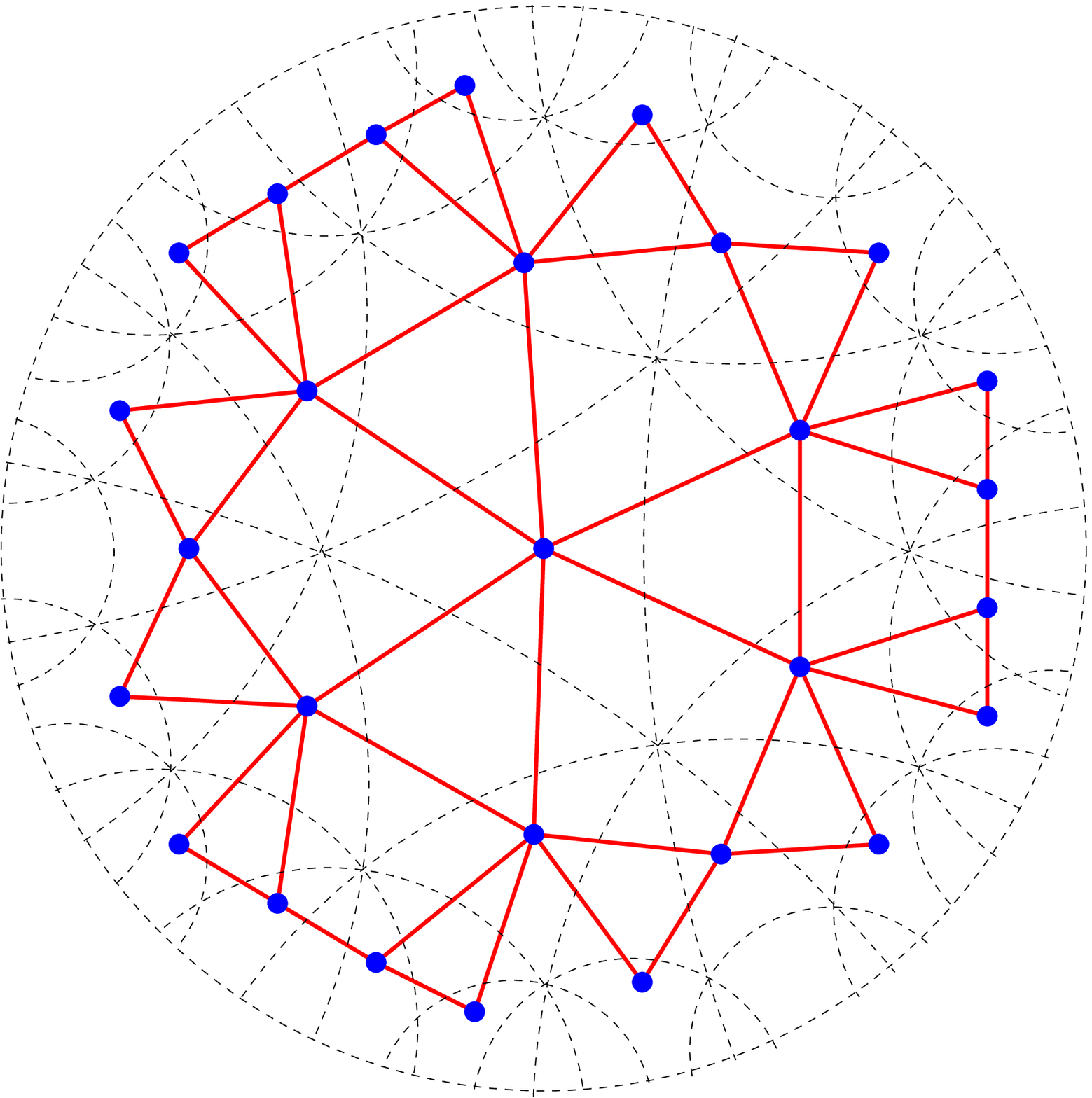}
     \includegraphics[width=0.4\textwidth]{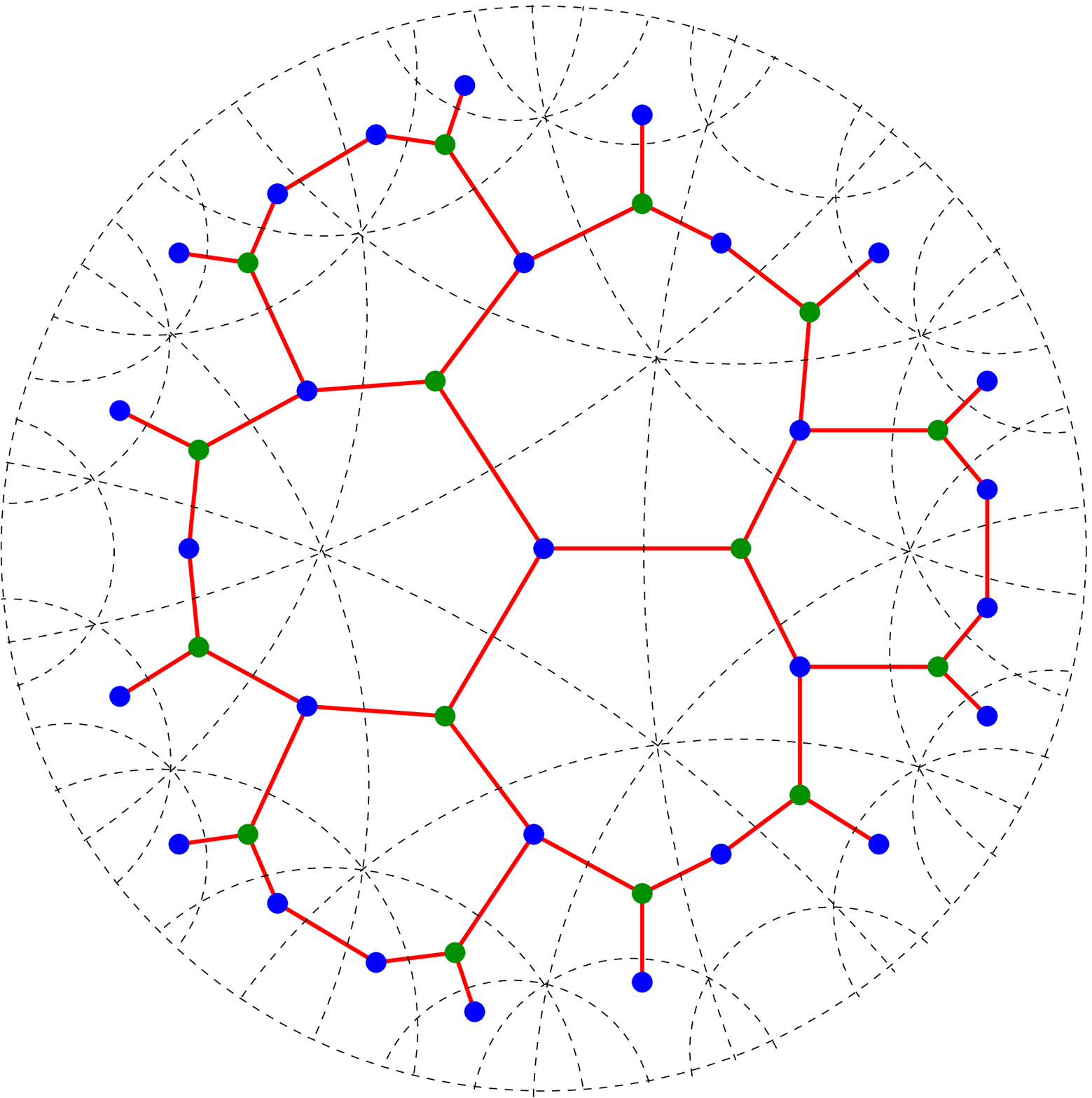}
  \end{center}
  \caption{The lifts of the Cayley graph $X_2$ (left) and of the
    $(\Delta-Y)$-transformation $T_2$ (right) to the Poincar{\'e} unit
    disc $\D$}
   \label{fig:DYtrafo}
\end{figure}

\subsection{Proof of Theorem \ref{thm:main}}
\label{subsec:main}

We first establish the expander properties of $X_k$ and $T_k$ and the
relations between their eigenvalues, stated in Theorem
\ref{thm:main}. It was proved in \cite[Section 2]{PV} that the group
$G$ generated by $x_0,x_1 \in \widetilde G$ is an index two subgroup
of the group $\widetilde G$ in \eqref{eq:G}. $G$ is explicitly given
by $G = \langle x_0, x_1 \mid r_1,r_2,r_3 \rangle$ with
\begin{eqnarray}
  r_1(x_0,x_1) &=& (x_1 x_0)^3x_1^{-3}x_0^{-3}, \nonumber \\
  r_2(x_0,x_1) &=& x_1x_0^{-1}x_1^{-1}x_0^{-3}x_1^2x_0^{-1}x_1x_0x_1, 
  \label{eq:r1r2r3} \\
  r_3(x_0,x_1) &=& x_1^3x_0^{-1}x_1x_0x_1x_0^2x_1^2x_0x_1x_0. \nonumber
\end{eqnarray}
(Note that our group $\widetilde G$ is denoted in \cite{PV} by
$\Gamma$, which is reserved for $PSL(2,\Z)$ in this paper.)  Moreover,
both groups $\widetilde G$ and $G$ have Kazhdan property (T) (see
\cite[Section 3]{PV}). Using \cite[Prop. 3.3]{Lub}, we conclude that
the Cayley graphs $X_k$ are expanders.

The adjacency operator $A$, acting on functions on the vertices of a
graph, is defined as
$$ Af(v) = \sum_{w \sim v} f(w). $$
Note that $V(X_k)$ is a subset of $V(T_k)$. We have the following
relations between the eigenfunctions of the adjacency operators on
$X_k$ and $T_k$.

\begin{thm} \label{thm:releigfunc}
  \begin{itemize}
  \item[(a)] Every eigenfunction $F$ on $T_k$ to an eigenvalue
    $\lambda \in [-3,3]$ gives rise to an eigenfunction $f$ to the
    eigenvalue $\mu = \lambda^2 - 3 \in [-3,6]$ on $X_k$ (with
    $f(v) = F(v)$ for all $v \in V(X_k)$).
  \item[(b)] Every eigenfunction $f$ on $X_k$ to an eigenvalue
    $\mu \in [-6,6] - \{3\}$ gives rise to two eigenfunctions $F_\pm$
    to the eigenvalues $\pm \sqrt{\mu + 3}$ on $T_k$ with
    $$ F_\pm(v) = \begin{cases} f(v) & \text{if $v \in V(X_k)$,} \\
      \pm \frac{1}{\sqrt{\mu+3}} \sum_{w \sim v} f(w) & \text{if $v \in
          V(T_k)-V(X_k)$.} \end{cases}
    $$
  \item[(c)] An eigenfunction $f$ on $X_k$ to the eigenvalue $-3$
    gives rise to an eigenfunction $F$ to the eigenvalue $0$ of $T_k$ with
    $$ F(v) = \begin{cases} f(v) & \text{if $v \in V(X_k)$,} \\
      0 & \text{if $v \in V(T_k)-V(X_k)$,} \end{cases}
    $$
    if and only if we have, for all triangles $\Delta$ in $X_k$,
    $\sum_{v \in V(\Delta)} f(v) = 0$.
  \end{itemize}
\end{thm}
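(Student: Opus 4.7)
The plan is to exploit the two-step combinatorial structure that the $(\Delta-Y)$-transformation imposes on $T_k$. I would first record the following observation about $X_k = \Cay(G_k,S)$: the defining relator $x_0 x_1 x_3 = 1$ of $G$ produces (via its cyclic and inverse variants) exactly three triangles through each vertex of $X_k$, and these three triangles partition the six $S$-edges at that vertex, so every edge of $X_k$ lies in a unique triangle. In the $(\Delta-Y)$-transformed graph $T_k$ this means that the three $T_k$-neighbours of an old vertex $v \in V(X_k)$ are precisely the three ``triangle-centres'' $v_\Delta$ with $\Delta \ni v$, while each new vertex $v_\Delta$ has as its three $T_k$-neighbours the elements of $V(\Delta) \subset V(X_k)$.

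The heart of the proof is a single identity, obtained by iterating the $T_k$-adjacency operator and restricting to old vertices: for any $F\colon V(T_k) \to \R$ with $f = F|_{V(X_k)}$ and any $v \in V(X_k)$,
$$ A^2 F(v) \;=\; \sum_{\Delta \ni v} \sum_{u \in V(\Delta)} F(u) \;=\; 3 F(v) + \sum_{w \sim v \text{ in } X_k} f(w) \;=\; 3 F(v) + A f(v). $$
The coefficient $3$ arises because $v$ is counted once in each of its three triangles; the outer sum then runs through every $X_k$-neighbour of $v$ exactly once, thanks to the edge-partition property above.

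Part (a) follows immediately: if $A F = \lambda F$ on $T_k$, applying $A$ once more gives $(\lambda^2 - 3) f(v) = A f(v)$ on $V(X_k)$, so $\mu = \lambda^2 - 3$; the range $\lambda \in [-3,3]$ (automatic since $T_k$ is $3$-regular) translates into $\mu \in [-3,6]$. For (b), I would take $f$ with $Af = \mu f$, $\mu \neq -3$, set $\lambda = \pm\sqrt{\mu+3} \neq 0$, and extend to $F_\pm$ by the stated formula. The equation $A F_\pm = \lambda F_\pm$ at a new vertex $v_\Delta$ is immediate from the definition of $F_\pm(v_\Delta)$, while at an old vertex $v$ one invokes the same triangle-partition identity to rewrite $\sum_{\Delta \ni v} \sum_{u \in V(\Delta)} f(u) = 3 f(v) + A f(v) = (\mu + 3) f(v) = \lambda^2 f(v)$, and then divides by $\lambda$. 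For (c), with $F$ extended by $0$ on new vertices, the equation $A F = 0$ at old vertices is automatic, while at a new vertex $v_\Delta$ it reduces to exactly $\sum_{u \in V(\Delta)} f(u) = 0$, which is both necessary and sufficient.

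The only genuinely delicate point is the borderline case $\mu = -3$: the prefactor $1/\sqrt{\mu+3}$ in the formula of (b) blows up, which is precisely why a separate construction is needed in (c) and why that construction is obstructed by the triangle-sum condition. Beyond this case distinction the argument is a routine linear-algebra verification, and the main conceptual step is really the combinatorial one of identifying the triangle partition at each old vertex.
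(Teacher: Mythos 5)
Your proposal is correct and follows essentially the same route as the paper: the central identity $A_{T_k}^2F(v)=3F(v)+A_{X_k}f(v)$ at old vertices is exactly the paper's relation $A_{X_k}=(A_{T_k})^2-3$, and parts (b) and (c) are verified by the same direct computation at old and new vertices, with the same case distinction at $\mu=-3$ where $\lambda=0$. The only difference is cosmetic: you spell out the combinatorial facts (each edge of $X_k$ lies in a unique triangle, and the three triangles through a vertex partition its six edges) that the paper uses implicitly when counting the closed walks of length two.
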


\begin{proof}
  (a) Let $f$ and $F$ be two functions on $X_k$ and $T_k$, related
  by $f(v) = F(v)$ for all $v \in V(X_k)$. Then
  $$ A_{X_k}f(v) = \sum_{w \sim_{X_k} v} f(w) = 
  \sum_{d_{T_k}(w,v) = 2} F(w) = (A_{T_k})^2 F(v) - 3 F(v), $$ which
  can also be written as $A_{X_k} = (A_{T_k})^2 - 3$. (Note that
  $\sim_{X_k}$ denotes adjacency in $X_k$, and $d_{T_k}$ is the
  combinatorial distance in $T_k$.) This implies immediately the
  connection between the eigenfunctions and eigenvalues.

  (b) Let $A_{X_k}f = \mu f$ and $F_\pm$ be defined as in the theorem.
  Let $\lambda = \pm \sqrt{\mu+3}$. Then we have for $v \in V(X_k)$:
  \begin{eqnarray*}
    A_{T_k}F_\pm(v) &=& \sum_{w \sim v} F_\pm(w) = \frac{1}{\lambda} \sum_{w \sim v}
    \sum_{x \sim w} F_\pm(x)\\
    &=& \frac{1}{\lambda} \left( 
      \sum_{w \sim_{X_k} v} f(w) + 3 f(v) \right)
    = \frac{\mu+3}{\lambda} f(v) = \lambda F_\pm(v), 
  \end{eqnarray*}
  and for $v \in V(T_k) - V(X_k)$:
  $$ A_{T_k}F_\pm(v) = \sum_{w \sim v} F_\pm(w) = 
  \lambda \left( \frac{1}{\lambda} \sum_{w \sim v} f(w) \right) =
  \lambda F_\pm(v). $$ 
  Note that $1/\lambda$ is well defined since $\lambda = \pm \sqrt{3}
  \neq 0$. 

  (c) In the case of $\mu = -3$ we have $\lambda=0$, and the above
  calculation for $v \in V(X_k)$ goes through without changes. For
  $v \in V(T_k) - V(X_k)$, the condition
  $$ 0 = A_{T_k}F_\pm(v) = \sum_{w \sim_{T_k} v} f(v) $$
  translates into the condition that the summation of $f$ over the
  vertices of all the triangles must vanish.
\end{proof}

Theorem \ref{thm:releigfunc} implies that the expander property of the
family $X_k$ carries over to the graphs $T_k$. Moreover, the
spectrum of $X_k$ cannot contain eigenvalues in the interval
$[-6,-3)$, since this would lead to non-real eigenvalues of
$T_k$. This finishes the proof of the spectral statements in Theorem
\ref{thm:main}.

Since $X_k$ are Cayley graphs of quotients of the group $G$ with
property (T), not all of these graphs can be Ramanujan (see
\cite[Prop. 4.5.7]{Lub}). But what can we say about their
$(\Delta-Y)$-transformations $T_k$? Theorem \ref{thm:releigfunc}
implies that $T_k$ is Ramanujan if and only if the largest non-trivial
eigenvalue of $X_k$ is $< 5$. MAGMA computations provide the
following numerical results:

\medskip

\begin{table}[htbp]
\begin{tabular}{l|l|l}
graph & number of vertices & largest non-trivial eigenvalue \\
\hline\rule[1.8mm]{0cm}{2mm}
$X_2$ & 32 &   2.828427124746190\dots \\ \rule[1.5mm]{0cm}{2mm}
$X_3$ & 128 &  4.340172973252067\dots \\ \rule[1.5mm]{0cm}{2mm}
$X_4$ & 1024 & 4.475244292138809\dots \\ \rule[1.5mm]{0cm}{2mm}
$X_5$ & 8192 & 5.160252515773351\dots
\end{tabular}
\end{table}

This implies that only $X_2$ and $X_3$ are Ramanujan; their
largest non-trivial eigenvalue needs to be $< 2 \sqrt{5} =
4.472135\dots$, which is no longer true for $k = 4$. Moreover, we have
$\sigma(X_k) \subset \sigma(X_{k+1})$, since the graphs
$X_k$ are a tower of coverings. Similarly, only $T_2, T_3, T_4$
are Ramanujan, since the covering properties of $X_k$ carry over
to their $(\Delta-Y)$-transformations $T_k$.

Finally, we obtain from \cite[Cor. 2.3]{PV} that 
\begin{equation} \label{eq:VTk}
|V(T_k)| = 2 |G_k| = 2 [G:(G \cap H_k)] \ge 2^K. 
\end{equation}
Since Conjecture 1 in \cite{PV} (i.e., $G \cap H_k = P_k(G)$) holds
for all $3 \le k \le 100$, \eqref{eq:VTk} holds for these indices with
equality. The faces of $T_k$ are determined by the orders of the
generators $x_0,x_1,x_3$ in the group $G_k$, which we determine
next. Let {\small
\begin{align*}
& \alpha_0 = \begin{pmatrix} 0 & 0 & 0 & 0 & 0 & 0 & 0 & 0 & 0\\ 
                   0 & 0 & 1 & 0 & 0 & 1 & 0 & 0 & 1\\ 
                   0 & 1 & 1 & 0 & 1 & 1 & 0 & 1 & 1 \end{pmatrix},
& \beta_0 = \begin{pmatrix} 0 & 0 & 0 & 0 & 0 & 0 & 0 & 0 & 0\\ 
                   0 & 1 & 1 & 0 & 1 & 1 & 0 & 1 & 1\\ 
                   0 & 1 & 0 & 0 & 1 & 0 & 0 & 1 & 0 \end{pmatrix},\\[.2cm]
& \alpha_1 = \begin{pmatrix} 0 & 0 & 0 & 0 & 1 & 1 & 0 & 1 & 0\\ 
                   0 & 1 & 0 & 1 & 0 & 0 & 0 & 0 & 1\\ 
                   1 & 1 & 1 & 0 & 0 & 0 & 0 & 1 & 0 \end{pmatrix},
& \beta_1 = \begin{pmatrix} 0 & 0 & 0 & 0 & 1 & 1 & 0 & 1 & 0\\ 
                   0 & 1 & 0 & 1 & 0 & 0 & 0 & 0 & 1\\ 
                   1 & 1 & 1 & 0 & 0 & 0 & 0 & 1 & 0 \end{pmatrix},\\[.2cm]
& \alpha_3 = \begin{pmatrix} 0 & 0 & 0 & 0 & 1 & 1 & 0 & 1 & 0\\ 
                   0 & 1 & 1 & 1 & 0 & 1 & 0 & 0 & 0\\ 
                   1 & 0 & 0 & 0 & 1 & 1 & 0 & 0 & 1 \end{pmatrix},
& \beta_3 = \begin{pmatrix} 0 & 0 & 0 & 0 & 0 & 1 & 0 & 1 & 1\\ 
                   1 & 1 & 0 & 0 & 1 & 1 & 0 & 0 & 0\\ 
                   0 & 1 & 1 & 0 & 0 & 1 & 1 & 0 & 0 \end{pmatrix}.
\end{align*}
}

\begin{lem} \label{lm:ordersxi}
  We have, in the notation of \cite{PV}, for $i \in \{0,1,3\}$:
  \begin{equation} \label{eq:pow2} 
  x_i^{2^l} = \begin{cases} M_{2^l-1}(\alpha_i,\dots) & \text{if $l$ is even}, \\
    M_{2^l-1}(\beta_i,\dots) & \text{if $l$ is odd}. \end{cases}
  \end{equation}
  This implies, in particular, that $\ord_{G_k}(x_i) = 2^r$ with $r$ given in \eqref{eq:rK}.
\end{lem}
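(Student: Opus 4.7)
The plan is to prove \eqref{eq:pow2} by induction on $l \geq 1$ inside the Toeplitz matrix representation of $G$ developed in \cite[Section 2]{PV}, and then to deduce the order of $x_i$ in $G_k$ by locating the first nonzero upper diagonal of $x_i^{2^l} - I$.

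First, I would recall from \cite{PV} how the generators $x_0, x_1, x_3$ sit inside the group $H$ of infinite upper-triangular Toeplitz matrices over $M(3, \mathbb{F}_2)$, and how the notation $M_j(\gamma_0, \gamma_1, \ldots)$ encodes a matrix in $H$ whose first $j$ upper diagonals vanish and whose leading block (at band position $j$) is $\gamma_0$. Two algebraic facts drive the argument: in characteristic $2$, squaring acts on $I + T$ with $T$ strictly upper-triangular by $(I+T)^2 = I + T^2$; and multiplying two such Toeplitz matrices with leading bands in positions $j_1$ and $j_2$ produces one whose leading band is at position $j_1 + j_2$ (barring cancellation), with leading block given by an explicit product of the leading blocks under the periodicity conventions of $H$.

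For the base case $l = 1$, I would compute $x_i^2$ directly in $H$ using the explicit Toeplitz expansions of $x_0, x_1, x_3$ and read off $\beta_i$ as the leading block at position $2^1 - 1 = 1$. For the inductive step, assume $x_i^{2^l} = I + M_{2^l - 1}(\gamma_i^{(l)}, \ldots)$ with $\gamma_i^{(l)} = \alpha_i$ for $l$ even and $\gamma_i^{(l)} = \beta_i$ for $l$ odd. Squaring via the rule above yields $x_i^{2^{l+1}} = I + M_{2^{l+1} - 1}(\gamma_i^{(l+1)}, \ldots)$, where $\gamma_i^{(l+1)}$ is determined from $\gamma_i^{(l)}$ by the induced squaring map on leading blocks together with the periodicity constraints of $H$. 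The verification then amounts to checking the explicit identities stating that this map sends $\alpha_i \mapsto \beta_i$ and $\beta_i \mapsto \alpha_i$ in $M(3,\mathbb{F}_2)$, which can be confirmed by a finite matrix computation (easily automated in MAGMA).

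Once \eqref{eq:pow2} is established, the order assertion follows immediately. The element $x_i^{2^l}$ lies in $G \cap H_k$ precisely when its first $k$ upper diagonals vanish, i.e.\ when $2^l - 1 \geq k$, provided the leading block $\alpha_i$ or $\beta_i$ does not itself vanish. Inspection of the matrices displayed just before the lemma shows that each $\alpha_i$ and $\beta_i$ is nonzero in $M(3, \mathbb{F}_2)$, so the smallest admissible exponent is $l = \lceil \log_2 (k+1) \rceil = \lfloor \log_2 k \rfloor + 1 = r$, yielding $\ord_{G_k}(x_i) = 2^r$. The main obstacle is the bookkeeping in the inductive step: the $3\times 9$ blocks $\alpha_i, \beta_i$ encode three consecutive bands of the infinite Toeplitz matrix tied together by the periodicity conditions of \cite{PV}, and one has to set up the leading-block recursion under squaring with enough care that these three-band windows really do interchange $\alpha_i \leftrightarrow \beta_i$ as claimed.
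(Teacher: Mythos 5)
Your proposal follows essentially the same route as the paper: the paper likewise derives \eqref{eq:pow2} from the Toeplitz representation (citing the squaring rule of \cite[Prop.~2.5]{PV}, which is exactly the characteristic-$2$ band-doubling recursion you set up by induction) and then reads off $\ord_{G_k}(x_i)=2^r$ from the condition that the leading nonzero band at position $2^l$ falls outside the first $k$ upper diagonals, i.e.\ $2^{r-1}-1\le k-1<2^r-1$. Your bookkeeping caveat about the $3\times 9$ windows and the order deduction (including the observation that nonvanishing of $\alpha_i,\beta_i$ is what makes $2^{r-1}$ fail) are correct and consistent with the paper's terser argument.
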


\begin{proof}
  Since $G_k$ is a $2$-group, $\ord_{G_k}(x_i)$ has to be a power of
  $2$. The formulas \eqref{eq:pow2} follow from a straightforward
  calculation from $x_i = M_0(\alpha_i,\dots)$ and Proposition 2.5 in
  \cite{PV}. This implies that $\ord_{G_k}(x_i) = 2^r$ if and only if
  $2^{r-1}-1 \le k-1 < 2^r-1$, i.e., $r = \lfloor \log_2 k \rfloor +
  1$.
\end{proof}

Lemma \ref{lm:ordersxi} implies that the faces of $T_k$ are regular
$2^{r+1}$-gons. The estimates for $|E(T_k)|$ and $|F(T_k)|$ in Theorem
\ref{thm:main} follow immediately from this fact, the inequality
\eqref{eq:VTk}, and the trivalence of the graphs $T_k$. The genus
estimate for $\Ss(T_k)$ can then be deduced from \eqref{eq:gSinf} and
\eqref{eq:mu}. This finishes the proof of Theorem \ref{thm:main}.

\subsection{Proof of \eqref{eq:asympTk} in Proposition
  \ref{prop:asymptnonflat}}
\label{subsec:prop12a}

We conclude from the trivalence of $T_k$ and \eqref{eq:gSinf} that
$$ \frac{6g(\Ss(T_k))}{|E(T_k^*)|} = 6 \,
\frac{1+(1-\mu_k)|V(T_k)|/4}{3 |V(T_k)|/2}. $$
Note that $|V(T_k)| = 2|H_k| \ge 2^K \to \infty$, which implies that
$$ \lim_{k \to \infty} \frac{6g(\Ss(T_k))}{|E(T_k^*)|} = 
1 - \lim_{k\to\infty} \mu_k. $$ 
Recall from \eqref{eq:mu} and Lemma \ref{lm:ordersxi} that $\mu_k =
3/\ord_{G_k}(x_0) \to 0$ as $k \to \infty$, finishing the proof of
\eqref{eq:asympTk}.

\subsection{Proof of Proposition \ref{prop:isomt2s8}}
\label{subsec:isomT2Pi8}

We first recall a few important facts about the Platonic graphs
$\Pi_N$ and the surfaces $\Ss_\infty(\Pi_N)$ and $\Ss(\Pi_N)$. For
more details, see \cite{ISi}. Let $\F$ be the Farey tessellation of
the hyperbolic upper half plane $\Hh^2$, and let $\Omega(\F)$ be the
set of oriented geodesics in $\F$. Recall that the {\em Farey
  tessellation} is a triangulation of $\Hh^2$ with vertices on the
line at infinity $\R \cup \{ \infty \}$, namely, the subset of
extended rationals $\Q \cup \{ \infty \}$. Two rational vertices with
reduced forms $a/c$ and $b/d$ are joined by an edge, a geodesic of
$\Hh^2$, if and only if $ad-bc = \pm 1$ (see \cite[Fig. 1]{ISi} for an
illustration of the Farey tessellation). The group of conformal
transformations of $\Hh^2$ that leave $\F$ invariant is the modular
group $\Gamma = PSL(2,\Z)$, which acts transitively on
$\Omega(\F)$. The {\em principal congruence subgroups} of $\Gamma$ are
normal subgroups defined by
$$ \Gamma(N) = \left\{ \begin{pmatrix} a & b \\ c & d \end{pmatrix} \in \Gamma 
  \mid \begin{pmatrix} a & b \\ c & d \end{pmatrix} \equiv
  \pm \begin{pmatrix} 1 & 0 \\ 0 & 1 \end{pmatrix} \ {\rm mod}\ N
\right\}. $$ 

It is well known (see, e.g, \cite{ISi}) that $\F/\Gamma(N)$ and
$\Pi_N$ are isomorphic, and $\F/\Gamma(N)$ is a triangulation of the
surface $\Ss_\infty(\Pi_N) = \Hh^2/\Gamma(N)$ by ideal triangles (the
vertices are, in fact, the cusps of $\Ss_\infty(\Pi_N)$). The
tessellation $(\Pi_N,\Ss_\infty(\Pi_N))$ can be interpreted as a {\em
  map} $\M_N$ in the sense of Jones/Singerman \cite{JSi}. The group
$\Aut(\M_N)$ of automorphisms of $\M_N$ is the group of orientation
preserving isometries of $\Ss_\infty(\Pi_N)$ preserving the
triangulation. As $\Gamma(N)$ is normal in $\Gamma$, we have that the
map $\M_N$ is {\em regular}, meaning that $\Aut(\M_N)$ acts
transitively on the set of directed edges of $\Pi_N$ (see \cite[Thm
6.3]{JSi}). Moreover, by \cite[Thm 3.8]{JSi},
$$ \Aut(\M_N) \cong \Gamma/\Gamma(N) \cong PSL(2,\Z_N). $$
(Note that in the case of a prime power $N = p^r$, $PSL(2,\Z_N)$ is
the group defined over the ring $\Z_N$ and not over the field with
$p^r$ elements.) Let $N \ge 7$. Noticing that all vertices of $\Pi_N$
have degree $N$, we obtain a smooth compact surface $\Ss(\Pi_N)$
by substituting every ideal triangle in $(\Pi_N,\Ss_\infty(\Pi_N))$ by
a compact hyperbolic $(2\pi/N,2\pi/N,2\pi/N)$-triangle, and glueing them
along their edges in the same way as the ideal triangles of
$\Ss_\infty(\Pi_N)$. The group of orientation preserving isometries of
$\Ss(\Pi_N)$ preserving this triangulation is, again, isomorphic to
$PSL(2,\Z_N)$. Hence, the automorphism group of the triangulation
$(\Pi_8,\Ss(\Pi_8))$ is $PSL(2,\Z_8)$ of order 192. This implies that
$\Ss(\Pi_8)$ is the unique compact hyperbolic surface of genus 5 with
maximal automorphism group (see \cite{Battsengel}). 

\begin{figure}[htbp]
\begin{center}
\includegraphics[width=\textwidth]{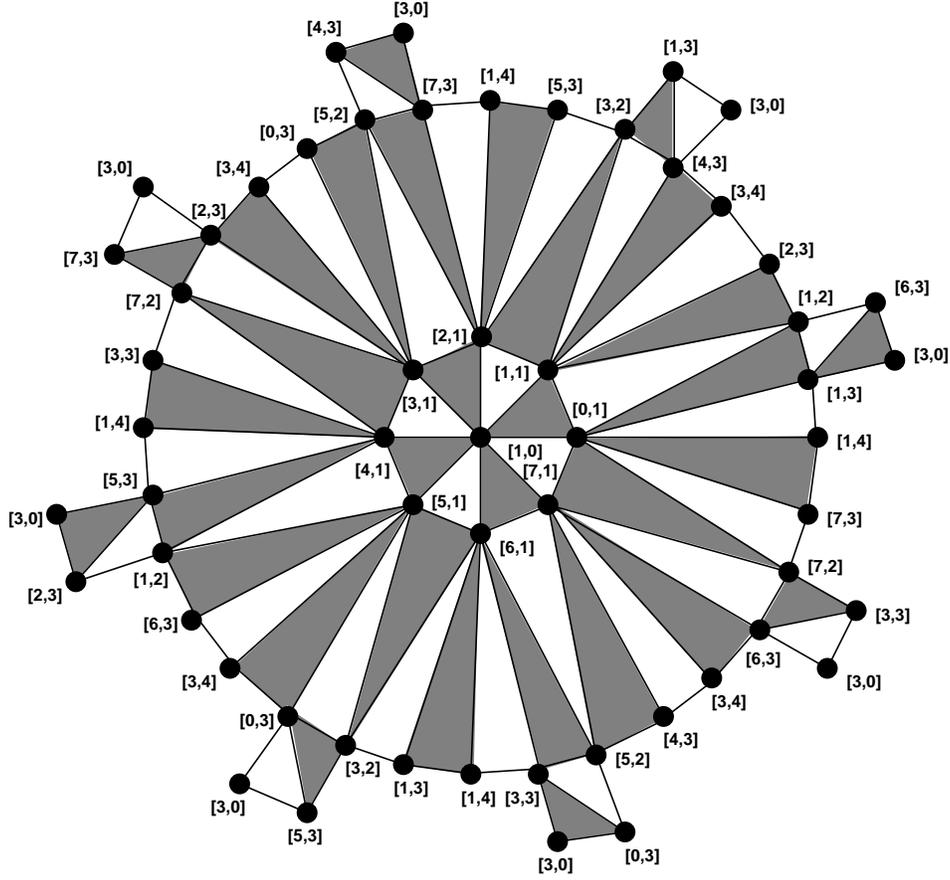}
\caption{The Platonic graph $\Pi_8$: Each triangle corresponds to a
  hyperbolic $(\pi/4,\pi/4,\pi/4)$-triangle of the tessellation of
  $\Ss(S_8)$. The edges along the boundary path are pairwise glued to
  obtain $\Ss(\Pi_8)$.}
\label{fig:s8}
\end{center}
\end{figure}

The $\Pi_8$-triangulation of $\Ss(\Pi_8)$ is shown in Figure
\ref{fig:s8}; the black-white pattern on the triangles is a first test
whether this triangulation can be isomorphic to the
$T_2^*$-triangulation of $\Ss(T_2)$. (The $\Pi_N$-triangulations for
$3 \le N \le 7$ can be found in Figs. 3 and 4 of \cite{ISi}.)
$PSL(2,\Z_8)$ acts simply transitively on the directed edges of this
triangulation. Consider now a refinement of this triangulation by
subdividing each $(\pi/4,\pi/4,\pi/4)$-triangle into six
$(\pi/2,\pi/3,\pi/8)$-triangles. It is easily checked that the smaller
$(\pi/2,\pi/3,\pi/8)$-triangles admit also a black-white colouring
such that the neighbours of all smaller black triangles are white
triangles and vice versa. Each black $(\pi/2,\pi/3,\pi/8)$-triangle is
in 1-1 correspondence to a half-edge of $\Pi_8$ which, in turn, can
be identified with a directed edge of $\Pi_8$. Consequently, the
orientation preserving isometries of the surface $\Ss(\Pi_8)$
corresponding to the elements in $PSL(2,\Z_8)$ act simply transitively
on the black $(\pi/2,\pi/3,\pi/8)$-triangles. In fact, $PSL(2,\Z_8)$
can be interpreted as a quotient of the triangle group
$\Delta^+(2,3,8)$, namely,
$$ PSL(2,\Z_8) \cong \langle x^2,y^3,z^8,xyz,(xz^2xz^5)^2 \rangle, $$ 
where $x,y,z$ correspond to rotations by $\pi,2\pi/3,\pi/4$ about the
three vertices of a given $(\pi/2,\pi/3,\pi/8)$-triangle. 

MAGMA computations show that $PSL(2,\Z_8)$ has a unique normal
subgroup $N$ of index 6, generated by the elements $X = x^{-1}z^2x$,
$Y = y^{-1}z^2y$ and $Z = z^2$, which is isomorphic to the triangle
group quotient $\Delta^+(4,4,4)/P_2(\Delta^+(4,4,4))$ via the explicit
isomorphism
\begin{equation} \label{eq:XYZ} X \mapsto \begin{pmatrix} -1 & 0 \\ 2
    & -1 \end{pmatrix}, \quad Y \mapsto \begin{pmatrix} -1 & 2 \\ -2 &
    3 \end{pmatrix}, \quad Z \mapsto \begin{pmatrix} 1 & 2 \\ 0 &
    1 \end{pmatrix}.
\end{equation}
Note that the matrices in \eqref{eq:XYZ}, viewed as elements in
$PSL(2,\Z)$, generate a group acting simply transitively on the black
triangles of the Farey tessellation in $\Hh^2$, as illustrated in
Figure \ref{fig:fargen}. The images of a black triangle ${\mathcal T}$
with vertices $0,1,\infty$ under $\{ X^{\pm 1}, Y^{\pm 1}, Z^{\pm 1}
\}$ are the six black triangles each sharing a common white triangle
with ${\mathcal T}$.

\begin{figure}[htbp]
\begin{center}
\includegraphics[width=0.8\textwidth]{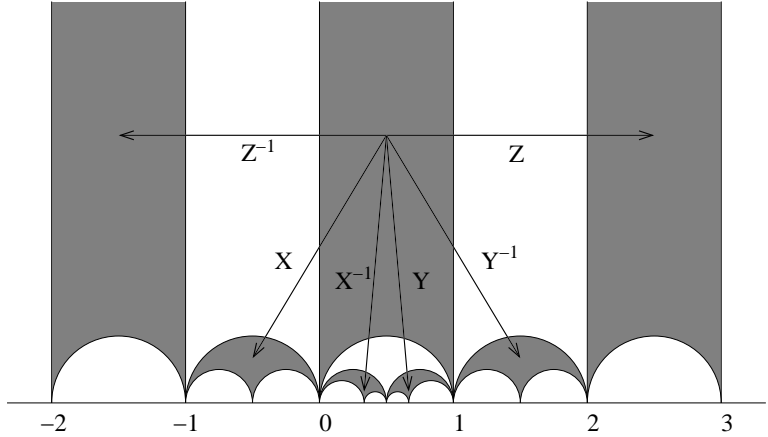}
\caption{The action of the elements $X^{\pm 1},Y^{\pm 1},Z^{\pm 1} \in
  PSL(2,\Z)$ on a triangle ${\mathcal T}$ with vertices $0,1,\infty$
  of the Farey tessellation.}
\label{fig:fargen}
\end{center}
\end{figure}

MAGMA computations also show that we have the explicit isomorphism
$$ N = \langle X,Y,Z \rangle \cong G_2 = \langle x_0,x_1,x_3 \rangle, $$
given by $X \mapsto x_0, Y \mapsto x_1, Z \mapsto x_3$. The normal
group $N \triangleleft PSL(2,\Z_8)$ is of order 32 and the quotient
$\Ss_0 = (S_8,\Ss(S_8))/N$ is an orbifold consisting of two hyperbolic
$(\pi/4,\pi/4,\pi/4)$-triangles (one of them black and the other
white). We conclude from the explicit isomorphism $N \cong G_2$
that the covering procedure discussed in Section \ref{subsec:surfs}
leads to isometric surfaces $\Ss(\Pi_8)$ and $\Ss(T_2)$, and that
$\Pi_8 \subset \Ss(\Pi_8)$ is dual to the tessellation
$(T_2,\Ss(T_2))$.

On the spectral side, the adjacency operators on the graphs $X_2$
and $\Pi_8$ compare as follows:

\medskip

\begin{table}[htbp]
\begin{tabular}{l|c|c|c|c|c|c|c|c|l}
  eigenvalue & $8$ & $6$ & $\sqrt{8}$ & $2$ & $0$ & $-2$ & $-\sqrt{8}$ & $-4$ & Total\\ \hline
  multiplicity in $X_2$ & $0$ & $1$ & $6$ & $6$ & $4$ & $9$ & $6$ & $0$ & $32$ \rule[1.5mm]{0cm}{3mm} \\
  multiplicity in $\Pi_8$ & $1$ & $0$ & $6$ & $0$ & $9$ & $0$ & $6$ & $2$ & $24$ 
  \rule[1.5mm]{0cm}{2mm}
\end{tabular}
\end{table}

We also like to mention that, for $k=2$, the
$(\Delta-Y)$-transformation $X_2 \to T_2$ has a group theoretical
interpretation. There exists a group extension $\widetilde{G_2}$
of $G_2$ by $\Z_2$, generated by involutions $A,B,C$ satisfying $X
= AB$, $Y = BC$ and $Z = CA$, and $T_2$ is the Cayley graph of
$\widetilde{G_2}$ with respect to the generators $A,B,C$. This
group theoretic interpretation of the $(\Delta-Y)$-transformation {\em
  fails} for $k \ge 5$.  In fact, the group
$$ T = \langle A,B,C \mid A^2, B^2, C^2,r_1(AB,BC),r_2(AB,BC),r_3(AB,BC) 
\rangle $$ 
with $r_1,r_2,r_3$ given in \eqref{eq:r1r2r3} is finite and of order
6144. If the introduction of the above involutions $A,B,C$ would lead
to a group extension $\widetilde{G_k}$, then $\widetilde{G_k}$
would have to be of order $2 |G_k|$ and a quotient of $T$ and,
therefore, of order $\le 6144$. However, we have $2 |G_5| = 16384$
in contradiction to the second condition. Thus we do not obtain a
Cayley graph representation of the graphs $T_k$ for $k \ge 5$ via this
procedure.

Let us finally explain why we can no longer have an isomorphism $T_k^* \cong
\Pi_{2^\rho}$ for $k \ge 3$, with $\rho$ appropriately
chosen. Let us assume that $\Pi_{2^\rho} = T_k^*$. Comparison of the vertex
degrees of $\Pi_{2^\rho}$ and $T_k^*$ leads to $\rho = r+1$, with $r$
given in \eqref{eq:rK}. Moreover, we conclude from \eqref{eq:VPiN}
below that $|V(\Pi_{2^\rho}| = 3 \cdot 2^{2\rho -3}$. We know from
Theorem \ref{thm:main} that 
$$ |V(T_k^*)| = |F(T_k)| \ge 3 \cdot 2^{K-r-1}. $$
The condition $|V(\Pi_{2^\rho})| = |V(T_k^*)|$ together with $\rho = r+1$
implies that $3r \ge K$ with $K$ in \eqref{eq:rK}, i.e., 
$$ 3 \lfloor \log_2 k \rfloor + 3 \ge 8 \lfloor k/3 \rfloor + 3\cdot(k\, {\rm mod}\, 3). $$ 
But one easily checks that this inequality holds only for $k=1,2$. (In
the case $k=1$, we have $\Pi_4 = T_1^*$, since $T_1$ is
combinatorially the cube and $\Pi_4$ is the octagon.) This shows that
the graph family $\Pi_N$ cannot contain any of the dual graphs $T_k^*$,
for indices $k \ge 3$.

\subsection{Proof of Corollary \ref{cor:lowerlambda1}}
\label{subsec:corlambda1}

The identity $2g-2 = |V(T_k)|$ between the genus of the surface
$\widehat \Ss(T_k)$ and the number of vertices of the trivalent graph
$T_k$ is easily checked. Moreover, every automorphism of the graph
$T_k$ induces an isometry on $\widehat \Ss(T_k)$. Since the graphs
$T_k$ form a power of coverings with powers of $2$ as covering
indices, the same holds true for the associated surfaces $\widehat
\Ss(T_k)$. Now, \cite{Bu} shows that $\lambda_1(\widehat \Ss(T_k))$
can be estimated from below by a fixed multiple of the isoperimetric Cheeger
constant of $T_k$. The expander property implies that the Cheeger
constants of $T_k$ have a uniform lower positive bound. This finishes
the proof of the Corollary \ref{cor:lowerlambda1}.

\section{The Platonic graphs}

\subsection{Algebraic description of vertices and axes}

Let us briefly recall some algebraic facts from \cite{ISi}. Both
groups $\Gamma=PSL(2,\Z)$ and $PSL(2,\Z_N)$ act on $V(\Pi_N)$ via
$$ \begin{pmatrix} a & b \\ c & d \end{pmatrix} [\lambda,\mu] = 
[a \lambda+b \mu,c \lambda+ d \mu], $$ and there is a 1-1
correspondence between the vertex set $V(\Pi_N)$ and the cosets
$\Gamma / \Gamma_1(N)$. Here, $\Gamma_1(N)$ is the congruence subgroup
given by
$$ 
\Gamma_1(N) = \left\{ \begin{pmatrix} a & b \\ c & d \end{pmatrix} \in \Gamma 
\mid \begin{pmatrix} a & b \\ c & d \end{pmatrix} \equiv \pm \begin{pmatrix}
1 & * \\ 0 & 1 \end{pmatrix} \ {\rm mod}\ N \right\}.
$$
In \cite{ISi}, the set of vertices was partitioned into {\em axes}.
Two vertices belong to the same axis if they have the same stabilizer
in $PSL(2,\Z_N)$. Since $PSL(2,\Z_N)$ acts transitively on $V(\Pi_N)$,
all axes have the same number of vertices. An interesting observation
is that if an element of $PSL(2,\Z_N)$ leaves a vertex $[\lambda,\mu]$
invariant, then any vertex $[\nu,\omega]$ with
$\lambda\omega-\mu\nu=0$ is also invariant under the same
element. Thus the axis containing $[1,0]$ is given by
\begin{equation} \label{eq:princax}
\A_{\rm princ} = \{ [\lambda,0] \mid \gcd(\lambda,N) = 1 \}, 
\end{equation}
and we call this axis the {\em principal axis} of $\Pi_N$. The set of all axes
of $\Pi_N$ is denoted by $A(\Pi_N)$. There is a 1-1 correspondence between
the axes of $\Pi_N$ and the cosets $\Gamma/\Gamma_0(N)$, where
$\Gamma_0(N)$ is the congruence subgroup
$$ 
\Gamma_0(N) = \left\{ \begin{pmatrix} a & b \\ c & d \end{pmatrix} \in
  \Gamma \mid \begin{pmatrix} a & b \\ c & d \end{pmatrix}
  \equiv \begin{pmatrix} * & * \\ 0 & * \end{pmatrix} \ {\rm mod}\ N
\right\}.
$$
From the 1-1 correspondences with the cosets of $\Gamma(N),
\Gamma_1(N), \Gamma_0(N)$, we can immediately obtain the numbers of
directed edges, vertices and axes of $\Pi_N$ as the indices of these
subgroups in $\Gamma$:
\begin{eqnarray}
  |A(\Pi_N)| &=& |\Gamma : \Gamma_0(N)| = N \prod_{p \vert N} 
  \left( 1 + \frac{1}{p} \right), \nonumber \\
  |V(\Pi_N)| &=& |\Gamma : \Gamma_1(N)| = \frac{N^2}{2} \prod_{p \vert N}
  \left( 1 - \frac{1}{p^2} \right), \label{eq:VPiN} \\
  |E(\Pi_N)| &=& \frac{1}{2}|\Gamma : \Gamma(N)| = \frac{N^3}{4} \prod_{p \vert N}
  \left( 1 - \frac{1}{p^2} \right), \nonumber
\end{eqnarray}
where the products run over the distinct prime divisors of $N$, see
for example \cite{Mi}. In particular, for a prime $p$, $\Pi_p$ has
$p+1$ axes, $(p^2-1)/2$ vertices and $p(p^2-1)/4$ undirected edges.

\subsection{Proof of \eqref{eq:asympPiN} in Proposition
  \ref{prop:asymptnonflat}} 
\label{subsec:prop12b}

From 
$$ \chi(\Ss(\Pi_N)) = 2-2g(\Ss(\Pi_N)) = V - E/3 $$
we conclude
$$ g(\Ss(\Pi_N)) = 1 + \frac{N^2(N-6)}{24} \prod_{p \vert N}
\left( 1 - \frac{1}{p^2} \right),$$ 
which immediately implies that
$$ \lim_{N \to \infty} \frac{6g(\Ss(\Pi_N))}{|E(\Pi_N)|} = 1. $$
Note also, that the genus of the surface $\Ss(\Pi_p)$ for a prime $p$
is given by $(p+2)(p-3)(p-5)/24$.

\subsection{Vertex connectivity of $\Pi_p$ and $\Pi_p'$}
\label{subsec:vertconnect}

Let $p$ be a fixed odd prime and $n= (p-1)/2$. The {\em wheel
  structure} of $\Pi_p$ was already discussed in \cite[Thm
2.1]{LR}. Let us present this and other geometric facts in our
terminology. The principal axis of $\Pi_p$ is given by
$$ \A_{\rm princ} = \{ [i,0] \in V(\Pi_p) \mid 1 \le i \le n \}. $$
The vertices of $\A_{\rm princ}$ and their $1$-ring neighbours form a
partition of $V(\Pi_p)$ into $n$ components with $p+1$ vertices
each. We call these components the {\em wheels} of $\Pi_p$, see Figure
\ref{fig:wheels}. The wheel with center $[i,0]$ ($1 \le i \le n$) is
denoted by $W_i$ and is a subgraph of $\Pi_p$ with $p+1$ vertices and
$2p$ edges. We also use the notation $\partial W_i$ for the induced
subgraph with vertex set $V(\partial W_i) = V(W_i) - \{ [i,0] \}$. We
call $\partial W_i$ the boundary of the $i$-th wheel. Note that
$\partial W_i$ is isomorphic to the cyclic graph of $p$ vertices.

Every vertex that is not in $\A_{\rm princ}$ is adjacent to exactly
two vertices of the boundary of any given wheel $W_i$, $1 \le i \le
n$. Indeed, because $PSL(2,\Z_p)$ acts transitively on $V(\Pi_p)$, we
may consider, w.l.o.g., the vertex $[0,1] \in \partial W_1$. The $p-1$
vertices adjacent to $[0,1]$ that are not in $\A_{\rm princ}$ are
$[1,x]$ with $x \in \{1,2,\dots,p-1\}$.
To find the vertices $[1,x]$ in $\partial W_i$, we need to solve
$$ \det \begin{pmatrix} 1 & i \\ x & 0 \end{pmatrix} = \pm 1, $$ 
which has exactly two solution $x = \pm i^{-1}$ (where we think of $i
\in \Z_p$) which correspond to two distinct vertices of $\Pi_p$.

\begin{figure}[htbp]
\begin{center}
\includegraphics[width=0.8\textwidth]{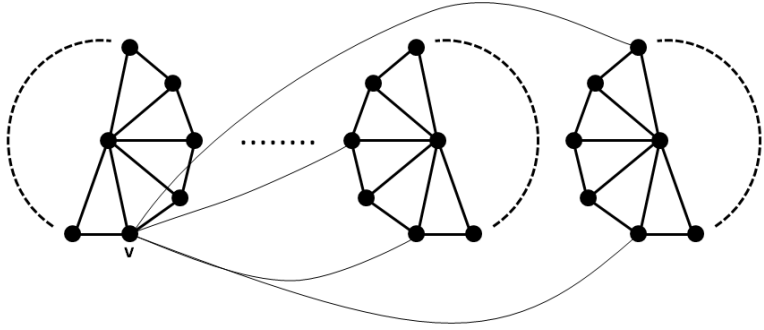}
\caption{$\Pi_p$ consists of $n=(p-1)/2$ wheels. Each vertex at the
  boundary of a wheel is connected with the center of its wheel and
  exactly two points on the boundary of any wheel (including itself).}
\label{fig:wheels}
\end{center}
\end{figure}

\begin{lm} \label{lm:help}
  Let $i,j \in \{1,2,\dots,n\}$. Then we have the following
  facts.
  \begin{itemize}
  \item[(a)] Let $x_1,x_2$ be two different vertices in $\partial W_i$
    and also $y_1,y_2$ be two different vertices in the same set
    $\partial W_i$ ($\{x_1,x_2\} \cap \{y_1,y_2\} \neq \emptyset$ is
    allowed). Then there exists a permutation $\sigma \in \Sym(2)$ and
    two vertex distinct paths $p_1, p_2$ in $\partial W_i$, such that
    $p_1$ connects $x_1$ with $y_{\sigma(1)}$ and $p_2$ connects $x_2$
    with $y_{\sigma(2)}$.
  \item[(b)] Every $x \in \partial W_i$ has precisely two neighbours in
    $\partial W_j$.
  \item[(c)] Assume additionally that $i \neq j$. Then there exists a
    bijective map $\Phi: V(\partial W_i) \to V(\partial W_j)$ such
    that $v \sim \Phi(v)$ for all vertices $v \in \partial W_i$.
  \end{itemize}
\end{lm}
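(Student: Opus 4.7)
Part (b) is immediate: the paragraph preceding the lemma already establishes that every vertex outside $\A_{\rm princ}$ has exactly two neighbours in the boundary of any wheel $W_j$, and every $x \in \partial W_i$ lies outside $\A_{\rm princ}$ (the wheels partition $V(\Pi_p)$, and the only principal-axis vertex each wheel contains is its centre). This yields (b) both for $i = j$ (where it recovers the cycle structure of $\partial W_i$) and for $i \neq j$.

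For part (a), my plan is to exploit that $\partial W_i$ is a cycle $C_p$. Fix an orientation of this cycle. When $x_1, x_2, y_1, y_2$ are all distinct, they appear in some cyclic order and partition $C_p$ into four arcs. There are exactly two non-crossing ways of pairing the four vertices into two pairs whose connecting arcs are vertex-disjoint, and one of them matches $\{x_1, x_2\}$ with $\{y_1, y_2\}$ in the required fashion; this choice determines $\sigma$, and the two non-crossing arcs serve as $p_1, p_2$. In the degenerate case $\{x_1, x_2\} \cap \{y_1, y_2\} \neq \emptyset$, I would pick $\sigma$ so that a shared endpoint yields a length-zero path, and let the other path be the unique path between the remaining two endpoints in the path graph $C_p - \{\text{shared vertex}\}$.

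For part (c) with $i \neq j$, consider the bipartite graph $B$ with vertex parts $V(\partial W_i)$ and $V(\partial W_j)$ and edges inherited from $\Pi_p$. Part (b), applied from each side, shows that $B$ is $2$-regular; hence $B$ is a disjoint union of (necessarily even) cycles, each of which admits a perfect matching, and the union of these matchings is the desired bijection $\Phi$ with $v \sim \Phi(v)$. If a more explicit formula is preferred, one can write $v = [c, i^{-1}] \in \partial W_i$ and check that its two neighbours in $\partial W_j$ are $[cij^{-1} \pm i, j^{-1}]$, so that $\Phi(v) := [cij^{-1} + i, j^{-1}]$ is a bijection with the required property. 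The main obstacle is really just the (routine) case analysis in (a) for coincidences among the four endpoints; parts (b) and (c) are essentially repackagings of the degree counts already developed in the text.
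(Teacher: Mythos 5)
Your proposal is correct and follows essentially the same route as the paper: part (b) is quoted from the preceding degree computation, part (a) is a case analysis on the cycle $\partial W_i$, and your explicit map $\Phi([c,i^{-1}])=[cij^{-1}+i,\,j^{-1}]$ is exactly the paper's bijection $\phi(\mu)=i+ij^{-1}\mu$. The only (harmless) embellishments are the non-crossing-arcs formulation of the case analysis in (a), which the paper leaves as ``straightforward inspection,'' and the alternative derivation of (c) from the fact that a $2$-regular bipartite graph decomposes into even cycles and hence has a perfect matching.
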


\begin{proof}
  Note that $\partial W_i$ is isomorphic to the cyclic graph of $p$
  vertices. (a) is then a straighforward inspection of all possible
  cases. (b) is already proved by our previous arguments. It
  remains to prove (c): Think of $i,j \in \Z_p-\{0\}$. Then the
  vertices in $\partial W_i$ are of the form $[\mu,i^{-1}]$ and the
  vertices in $\partial W_j$ of the form $[\nu,j^{-1}]$ with $\mu,\nu
  \in \Z_p$. The map $\phi: \Z_p \to \Z_p$, defined by $\phi(\mu) =
  i+ij^{-1}\mu$, is obviously a bijection, and we have $[\mu,i^{-1}]
  \sim [\phi(\mu),j^{-1}]$, finishing the proof.
\end{proof}

Note that the wheel structure is not confined to the choice of the
principal axis. Since the group $PSL(2,\Z_p)$ maps axes to axes and acts
transitively on them, we can choose any axis $\A$ as the centers of
the $n$ wheels, and Lemma \ref{lm:help} is still valid in this
setting.

\medskip

Now we prove that $\Pi_p$ is $p$-vertex-connected. Notice that the
arguments in this proof also give $\diam(\Pi_p) \le 3$ as a by-product.

\begin{proof}
  We will show that for any two vertices of $\Pi_p$, we can find $p$
  vertex disjoint paths connecting them. Then the result will follow from
  Menger's Theorem. 

  Since $PSL(2,\Z_N)$ acts transitively on $V(\Pi_p)$, we can assume
  that the start vertex is $[1,0] \in W_1$. Separating three cases, we
  will find $p$ vertex distinct paths to
  \begin{itemize}
  \item[(i)] the vertices in $\partial W_1$,
  \item[(ii)] the vertices in any $\partial W_j$ with $2 \le j \le n$,
  \item[(iii)] the other vertices in $\A_p$.
  \end{itemize}
  
  Ad (i): Assume that the end vertex is $[\nu,1]$. Then we already
  have three vertex disjoint paths given by
  $$ [1,0] \to [\nu,1], \quad [1,0] \to [\nu\pm1,1] \to [\nu,1]. $$
  We need to find vertex disjoint paths starting with $[1,0] \to
  [\nu\pm i,1]$ and ending at $[\nu,1]$, for $2 \le i \le n$. By
  Lemma \ref{lm:help}(c), we can find two different vertices $x_1,x_2
  \in \partial W_i$ such that $[\nu-i,1] \sim x_1$ and
  $[\nu+i,1] \sim x_2$. By Lemma \ref{lm:help}(b), $[\nu,1]$ has
  two different neighbours $\{y_1,y_2\}$ in $\partial W_i$. We now use
  Lemma \ref{lm:help}(a) to complete the paths.

  Ad (ii): We assume $p \ge 5$, for otherwise there is nothing to
  prove. Let us assume that the end vertex is in $\partial W_i$
  with $2 \le i \le n$, and let us denote this vertex by $w
  \in \partial W_i$. Let $v_-, v_+ \in \partial W_1$ be the two
  neighbours of $w$ in the first wheel. Choose three different
  vertices $v_1, v_2, v_3 \in \partial W_1 - \{v_-,v_+\}$, and use
  Lemma \ref{lm:help}(c) to find three different vertices $w_1,w_2,w_3
  \in \partial W_i - \{w\}$ such that $v_j \sim w_j$ for $1 \le j \le
  3$. W.l.o.g., we can assume that the pair $\{w_1,w_3\}$ separates
  $w_2$ and $w$ within $\partial W_i$. Let $q_1, q_3$ be the two
  vertex disjoint paths in $\partial W_i - \{w_2\}$ connecting $w$
  with $w_1$ and $w_3$, respectively. Then we already have five vertex
  disjoint paths given by
  $$ 
  [1,0] \to v_\pm \to v, \quad [1,0] \to v_2 \to w_2 \to [i,0] \to w,
  $$
  and 
  $$
  [1,0] \to v_1 \to w_1 \stackrel{q_1}{\longrightarrow} w, \quad
  [1,0] \to v_3 \to w_3 \stackrel{q_3}{\longrightarrow} w.
  $$
  Notice that for any wheel $W_j$ with $j \not\in \{1,i\}$, we have
  not yet used any edges with one vertex in $\partial W_j$. We will
  see that every such wheel allows us to create two more vertex
  disjoint paths from $[1,0]$ to $w$, finishing this case. Let
  $y_1,y_2$ be the two different vertices in wheel $\partial W_j$
  adjacent to $w$. Choose two different vertices $v', v'' \in \partial
  W_1$ which have not been used yet and associate to them two
  different vertices $x_1,x_2 \in \partial W_j$ such that $v' \sim
  x_1$ and $v'' \sim x_2$, using Lemma \ref{lm:help}(c). Then we can
  use Lemma \ref{lm:help}(a) to complete the paths within $\partial
  W_j$.

  Ad (iii): This is the easiest case. Assume that the end vertex is
  $[i,0] \in W_i$ with $2 \le i \le n$. We use the bijection
  $\Phi: V(\partial W_1) \to V(\partial W_i)$ in Lemma \ref{lm:help}(c)
  to create the $p$ vertex disjoint paths 
  $$ [1,0] \to [0,\mu] \to \Phi([0,\mu]) \to [i,0] $$
  with $1 \le \mu \le p$.
\end{proof}

Next, we present the proof that $\Pi_p'$ is
$(p-1)$-vertex-connected. In contrast to the previous proof, the
arguments given here do not imply that $\diam(\Pi_p') \le 3$.

\begin{proof}
  Let $v,w \in \Pi_p'$ be two different vertices with $v \in \partial
  W_i$ and $w \in \partial W_j$. We consider the two cases $i = j$ and
  $i \neq j$ separately:

  Case $i=j$: Obviously, we can choose two vertex disjoint paths
  within $\partial W_i$ to connect $v$ and $w$. Next, we show that
  every wheel $\partial W_j$ with $j \neq i$ gives rise to two
  additional vertex disjoint paths. Let $x_1,x_2 \in \partial W_j$ be
  the two distinct neighbours of $v$, and $y_1,y_2 \in \partial W_j$
  be the two distinct neighbours of $w$. Then we can use Lemma
  \ref{lm:help}(a) to complete the paths within $\partial W_j$.
  
  Case $i \neq j$: Let $w_1,w_2 \in \partial W_j$ be the neighbours of
  $v$ and $v_1,v_2 \in \partial W_i$ be the neighbours of $w$. Then,
  using only additional edges in $\partial W_i \cup \partial W_j$, we
  can find four vertex disjoint paths $v \to \cdots \to v_k \to w$, $v
  \to w_k \to \cdots \to w$ (for $k=1,2$). Again, every wheel $W_l$
  with $l \not\in \{i,j\}$ will give rise to two more vertex disjoint
  paths. Let $x_1,x_2 \in \partial W_l$ be the neighbours of $v$, and
  $y_1, y_2 \in \partial W_l$ be the neighbours of $w$. Use Lemma
  \ref{lm:help}(c) to complete the paths within $\partial W_l$.
\end{proof}

Finally, we prove $\diam(\Pi_p') = 3$.

\begin{proof}
  Let us first confirm that any two different vertices in the same
  wheel can be connected by a path of length $2$: Let $1 \le i \le n$
  and $[\mu,i^{-1}],[\nu,i^{-1}] \in \partial W_i$ (thinking of $i \in
  \Z_p$) be the two vertices. The required path is then given by
  $$ [\mu,i^{-1}] \to 
  [ 2(\mu-\nu)^{-1}\mu i -i,2(\mu-\nu)^{-1}] \to [\nu,i^{-1}]. $$ 
  Now choose two vertices $v \in \partial W_i$ and $w \in \partial
  W_j$ on different wheels. Let $v' \in \partial W_i$ be one of the
  two neighbours of $w$ in the $i$-th wheel. Connecting $v$ and $v'$
  by a path of length 2 (as shown before) implies that $d(v,w) \le
  d(v,v') + 1 \le 3$.
\end{proof}

Note that the graph $\Pi_p'$ was initially defined as the induced
subgraph of $\Pi_p$ with vertex sets $V(\Pi_p) - \A_{\rm princ}$. Alternatively,
$\Pi_p'$ can also described as the Cayley graph $\Cay(U_p,S)$ with
$$ U_p = \left\{ \begin{pmatrix} * & * \\ 0 & * \end{pmatrix} 
  \in PSL(2,\Z_p) \right\} \cong \Gamma_0(p) / \Gamma(p) \ \text{and}
\, S = \left\{ \begin{pmatrix} * & 1 \\ 0 & * \end{pmatrix} \in U_p
\right\}. $$ 
The vertices $[\lambda,\mu] \in V(\Pi_p')$ (with non-vanishing second
coordinate $\mu$) are then identified with the matrices
$\begin{pmatrix} \mu^{-1} & \lambda \\ 0 & \mu \end{pmatrix} \in U_p$.

\subsection{Ramanujan properties ''without number theory''}
\label{subsec:ramanujan}

As before, we assume that $p$ is a fixed odd prime and $n=
(p-1)/2$. The considerations of the previous section show also that
$\Pi_p$ is a $n$-fold covering $\pi: \Pi_p \to K_{p+1}$ of the
complete graph $K_{p+1}$, where the preimages $\pi^{-1}(v)$ correspond
to the axes of $\Pi_p$. It is useful to think of the vertices in
$K_{p+1}$ as the points in the finite projective line over the field
$\Z_p$, i.e., $V(K_{p+1}) = \{0,1,\dots,p-1,\infty\}$ and the covering
map is then given, algebraically, by
$$ \pi([\lambda,\mu]) = \lambda\mu^{-1}, $$
with the usual convention $\infty^{-1} = 0$ and $0^{-1} =
\infty$. In particular, we have $\A_{\rm princ} = \pi^{-1}(\infty)$.
Note that $PSL(2,\Z_N)$ acts also on the vertices of $K_{p+1}$ via 
$$ \begin{pmatrix} \alpha & \beta \\ \gamma & \delta \end{pmatrix} z = 
(\alpha z + \beta)(\gamma z + \delta)^{-1}. $$ 
One easily checks that $\pi( g v ) = g \pi(v)$ for all
$g \in PSL(2,\Z_N)$ and $v \in V(\Pi_p)$.

Let us now explicitely derive the spectra of the graphs $\Pi_p$ and
$\Pi_p'$. We will use the following notation: For a linear operator
$T$ on a finite dimensional vector space, we denote the eigenspace of
$T$ to the eigenvalue $\lambda$ by $\E(T,\lambda)$.

We start with a ''number theory free'' proof of Theorem 4.2 in 
\cite{Gunnells}, using the covering $\pi: \Pi_p \to K_{p+1}$.

\begin{proof}
  Every eigenfunction $f$ of $K_{p+1}$ gives rise to an eigenfunction
  $F: V(\Pi_p) \to \C$ of the same eigenvalue via $F(v) =
  f(\pi(v))$. The spectrum of the adjacency operator on $K_{p+1}$ is
  given by (see, e.g., \cite[p. 17]{Biggs})
  $$ \sigma(K_{p+1}) = \{ p, \underbrace{-1,\dots,-1}_{\text{p times}} \}. $$
  This implies that $\sigma(\Pi_p)$ contains the eigenvalue $p$ with
  multiplicity one and the eigenvalue $-1$ with multiplicity $\ge p$.

  Our next aim is to prove that the eigenspace $\E(A^2,p)$ of the
  square of the adjacency operator on $\Pi_p$ has dimension
  $(p+1)(p-3)/2$. Let $f: V(\Pi_p) \to \C$ be a function satisfying
  \begin{equation} \label{eq:eigp}
  A^2 f(v) = p f(v) \qquad \text{for all $v \in V(\Pi_p)$.} 
  \end{equation}
  Note that \eqref{eq:eigp} can be viewed as a homogenous system
  of $(p^2-1)/2$ linear equations. The key observation is that all
  linear equations corresponding to vertices of the same axis
  coincide, i.e., we end up with only $p+1$ linear independent
  homogeneous equations (since $p+1$ equals the number of axes),
  showing that the eigenspace has dimension at least 
  $$ |V(\Pi_p)| - (p+1) = \frac{p^2-1}{2} - (p+1) = \frac{(p+1)(p-3)}{2}. $$ 
  Indeed, since $PSL(2,\Z_p)$ acts transitively on the vertices, we
  only need to show that the linear equations of \eqref{eq:eigp}
  corresponding to the vertices in the principal axis $\A_p$
  coincide. Recall that $S_p$ has the wheel-structure given in Figure
  \ref{fig:wheels}. Let $v \in \A_p$. Then we have
  \begin{equation} \label{eq:a2cond}  
  A^2f(v) = p f(v) + 2 \sum_{i=1}^n \sum_{w \in \partial W_i} f(w),
  \end{equation}
  since there are exactly $p$ paths of length 2 from $v$ to itself, no
  paths of length 2 from the centers of all the other wheels to $v$,
  and for every $w \in \cup_i \partial W_i$ there are exactly $2$
  paths from $w$ to $v$ of length 2, because of Lemma
  \ref{lm:help}(b). Note that the combination of \eqref{eq:eigp} and
  \eqref{eq:a2cond} simplifies to
  $$ \sum_{i=1}^n \sum_{w \in \partial W_i} f(w) = 0, $$
  independently of the choice of $v \in \A_p$. This shows that $\dim
  \E(A^2,p) \ge (p+1)(p-3)/2$. Adding up the multiplicities of all
  eigenvalues, we see that $\dim \E(A^2,p) = (p+1)(p-3)/2$.
  
  If $f_1,\dots,f_K$ span the space $\E(A^2,p)$, then the $2K$ functions
  $$ \sqrt{p}f_1 \pm A f_1,\dots,\sqrt{p}f_K \pm A f_K $$
  are eigenfunctions of $A$ to the eigenvalues $\pm \sqrt{p}$, and
  they also span $\E(A^2,p)$. This shows that we have
  $$ \E(A^2,p) = \E(A,\sqrt{p}) \oplus \E(A,-\sqrt{p}). $$
  Finally, the equality
  $$ \dim \E(A,\sqrt{p}) = \dim \E(A,-\sqrt{p}) = \frac{(p+1)(p-3)}{4} $$
  follows from Lemma \ref{lm:eqdim} below.
\end{proof}

\begin{lm} \label{lm:eqdim}
  Let $T$ be a square matrix with rational entries and $K$ be a positive
  integer which is not a square. Then we have
  $$ \dim \E(T,\sqrt{K}) =\dim \E(T,-\sqrt{K}). $$ 
\end{lm}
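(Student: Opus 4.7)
The plan is to exploit the non-trivial Galois automorphism $\sigma$ of the quadratic extension $\Q(\sqrt{K})/\Q$, which swaps $\sqrt{K}$ and $-\sqrt{K}$ while fixing $\Q$ pointwise. Since the entries of $T$ are rational and hence fixed by $\sigma$, extending $\sigma$ coordinate-wise to a map $\sigma \colon \Q(\sqrt{K})^n \to \Q(\sqrt{K})^n$ should give a natural bijection between the two eigenspaces.

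First, I would reduce the problem from $\C$ to $F := \Q(\sqrt{K})$. Since $T$ has entries in $\Q \subset F$, the rank of $T \mp \sqrt{K} I$ is invariant under the field extension $F \subset \C$, so the eigenspaces $\E(T,\pm\sqrt{K})$ in $\C^n$ have the same dimensions as the $F$-vector spaces $\E_F(T,\pm\sqrt{K}) = \{v \in F^n \mid Tv = \pm\sqrt{K}\, v\}$. Next, using $\sigma(Tv) = T\sigma(v)$ (valid because $T$ is entry-wise $\sigma$-invariant) together with $\sigma(\sqrt{K} v) = -\sqrt{K}\,\sigma(v)$, one sees that $\sigma$ sends $\E_F(T,\sqrt{K})$ into $\E_F(T,-\sqrt{K})$, and vice versa. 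Since $\sigma^2 = \mathrm{id}$, this is a bijection.

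The only subtlety in the argument is that $\sigma$ is merely $\Q$-linear (in fact $\sigma$-semilinear over $F$), so it does not automatically preserve $F$-dimensions. This will be resolved by passing through $\Q$-dimensions: the bijection gives $\dim_\Q \E_F(T,\sqrt{K}) = \dim_\Q \E_F(T,-\sqrt{K})$, and since $K$ is not a square we have $[F:\Q]=2$, so every $F$-vector space has $\Q$-dimension exactly twice its $F$-dimension; equality of $\Q$-dimensions therefore forces equality of $F$-dimensions. The hypothesis that $K$ is not a square enters precisely at this final step, by guaranteeing that $\sigma$ is indeed a non-trivial Galois automorphism of degree $2$. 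Everything else is routine verification.
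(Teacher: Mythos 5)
Your proof is correct, and it takes a genuinely different route from the paper. The paper argues via the characteristic polynomial $p(z)\in\Q[z]$: writing $p(z)=p_{even}(z)+p_{odd}(z)\,z$ with $p_{even},p_{odd}$ even, it observes that $p(\sqrt{K})=0$ forces $p_{even}(\sqrt{K})=p_{odd}(\sqrt{K})=0$ by irrationality of $\sqrt{K}$, hence $p(-\sqrt{K})=0$, and then splits off factors of $z^2-K$ iteratively. That argument is completely elementary (no field theory beyond irrationality), but strictly speaking it tracks the \emph{algebraic} multiplicities of $\pm\sqrt{K}$, which coincide with the eigenspace dimensions only because the matrices $T$ to which the lemma is applied are symmetric adjacency operators. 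Your Galois-conjugation argument works directly with the \emph{geometric} multiplicities $\dim\ker(T\mp\sqrt{K}I)$ and is therefore valid for an arbitrary rational square matrix as the lemma is literally stated: the reduction to $F=\Q(\sqrt{K})$ via invariance of rank under field extension is sound, the semilinearity of the coordinatewise conjugation $\sigma$ is correctly identified as the one subtlety, and your resolution through $\Q$-dimensions (using $[F:\Q]=2$) is clean. (One could even skip that last step: a $\sigma$-semilinear bijection carries an $F$-basis to an $F$-basis, so it preserves $F$-dimension outright.) In short, your proof is slightly less elementary but slightly more general and more directly aligned with what the lemma asserts.
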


\begin{proof}
  The proof is based on the fact that $\sqrt{K}$ is irrational. Let
  $p(z) \in \Q[z]$ be the characteristic polynomial of $T$. We split $p(z)$
  into its even and odd part, i.e.,
  $$ p(z) = p_{even}(z) + p_{odd}(z) z, $$
  with even polynomials $p_{even}(z),p_{odd}(z)$. Note that we have
  $$ p(\sqrt{K}) = p_{even}(\sqrt{K}) + p_{odd}(\sqrt{K}) \sqrt{K} $$
  and $p_{even}(\sqrt{K}), p_{odd}(\sqrt{K}) \in \Q$. Therefore, if
  $\sqrt{K}$ is a root of $p(z)$, then $\sqrt{K}$ is also a root of
  both polynomials $p_{even}(z)$ and $p_{odd}(z)$, separately. This
  implies that $-\sqrt{K}$ is also a root of $p(z)$. We can then split
  off the factor $z^2 - K$ from $p(z)$, and repeat the procedure with
  the remaining polynomial.
\end{proof}

Next we derive the spectrum of the modified graph $\Pi_p'$, using the
$n$-fold covering map $\pi: \Pi_k' \to K_p$ and the wheel-structure,
which partitions the vertex set $V(\Pi_p')$ into $n$ disjoint sets
$\partial W_i$ of $p$ vertices, each. This will finish the proof of
Theorem \ref{thm:platonic}.

\begin{proof} The proof of the spectral statements in Theorem
  \ref{thm:platonic} proceeds in steps.

  (i) Let ${\mathcal W}$ be the vector space of all functions which
  are constant on the wheels. We first introduce a basis of
  eigenfunctions of this vector space. Let $\zeta_n = e^{2\pi i/n}$
  and, for $0 \le j \le n-1$, define
  $$ f_j(v) = \zeta_n^{ij} \quad \text{if $v \in \partial W_i$.} $$
  Note that $f_0$ is the constant function to the eigenvalue $p-1$.
  It is easily checked that $A f_j = 0$ for $j \ge 1$. Since these
  functions are linearly independent, they form a basis of ${\mathcal
    W}$. Moreover, we have $\dim \E(A,0) \ge n-1=(p-3)/2$.

  (ii) Let ${\mathcal V}$ be the vector space of all functions which
  are constant along all axes. Every such function is a lift $F(v) =
  f(\pi(v))$ of a function $f$ on $K_p$. Note that eigenfunctions of
  $K_p$ are lifted to eigenfunctions to the same eigenvalue, so
  ${\mathcal V}$ can be viewed as the span of a constant function
  and $p-1$ linear independent eigenfunctions to the eigenvalue
  $-1$. In particular, we have $\dim \E(A,-1) \ge p-1$.
  
  (iii) Note that ${\mathcal W} \cap {\mathcal V} = {\rm span}(f_0)$.
  By the orthogonality of eigenfunctions, it only remains to study the
  eigenfunctions in the orthogonal complement $({\mathcal W} +
  {\mathcal V})^\bot$ of dimension
  $$ |V(\Pi_p')| - (\dim {\mathcal W} + \dim {\mathcal V}) + 1 = 
  \frac{(p-1)(p-3)}{2} = K. $$ 

  Let $g_1,\dots,g_K$ be a basis of this orthogonal complement by
  eigenfunctions with $A g_i = \lambda_i g_i$. We now extend each
  $g_i$ trivially to a function $\widetilde g_i$ on $S_p$ by setting
  $\widetilde g_i(v) = 0$ for all $v \in \A_p$. Note that these
  extensions are eigenfunctions of the Platonic graph $\Pi_p$ to the
  same eigenvalue, i.e., $A \widetilde g_i = \lambda_i \widetilde
  g_i$. Therefore, we must have $\lambda_i \in \{p+1,-1,\pm
  \sqrt{p}\}$. As discussed in the previous proof, the span of the
  eigenfunctions of $\Pi_p$ to the eigenvalues $-1$ and $p+1$ is
  obtained via lifting the eigenfunctions of $K_{p+1}$, and the
  restriction of these functions to $\Pi_p'$ must therefore lie in
  $\mathcal V$. This shows that we must have $\lambda_i = \pm
  \sqrt{p}$.

  (iv) Adding up the multiplicities of all eigenvalues, we
  conclude that
  \begin{eqnarray*}
  \dim \E(A,\sqrt{p}) \oplus \E(A,-\sqrt{p}) &=& (p-1)(p-3)/2, \\
  \dim \E(A,0) &=& (p-3)/2, \\
  \dim \E(A,-1) &=& p-1. 
  \end{eqnarray*}
  We finally obtain 
  $$ \dim \E(A,\sqrt{p}) = \dim \E(A,-\sqrt{p}) = \frac{(p-1)(p-3)}{4}, $$
  by applying, again, Lemma \ref{lm:eqdim}.
\end{proof}

\end{document}